\theoremstyle{plain}
\newtheorem*{thma}{Theorem A}
\newtheorem*{thmb}{Theorem B}
\newtheorem*{thmc}{Theorem C}
\newtheorem*{thmd}{Theorem D}
\theoremstyle{remark}
\newtheorem*{rem}{Remark}
\theoremstyle{plain}
\newtheorem{theorem}{Theorem}
\newtheorem{proposition}[theorem]{Proposition}
\newtheorem{corollary}[theorem]{Corollary}
\newtheorem{lemma}[theorem]{Lemma}
\newtheorem{observation}[theorem]{Observation}
\theoremstyle{definition}
\newtheorem{definition}[theorem]{Definition}
\theoremstyle{remark}
\newtheorem{remark}[theorem]{Remark}
\newcommand{\tl}{\mathcal{TL}}
\newcommand{\frakd}{\mathfrak{D}}
\newcommand{\trivial}{\mathds{1}}
\newcommand{\setn}{\langle n - 1 \rangle}
\newcommand{\n}{\mathbb{N}}
\DeclareMathOperator{\tor}{Tor}
\DeclareMathOperator{\cupmod}{Cup}
\title{The homology of a Temperley--Lieb algebra on an odd number of strands}
\author{Robin J. Sroka}
\address{Department of Mathematics \& Statistics, McMaster University, 
Hamilton, ON, Canada}
\email{srokar@mcmaster.ca}
\begin{document}

\begin{abstract}
  We show that the homology of any Temperley--Lieb algebra $\tl_n(a)$ on an odd number of strands vanishes in positive degrees. This improves a result obtained by Boyd--Hepworth. In addition we present alternative arguments for the following two vanishing results of Boyd--Hepworth. (1) The stable homology of Temperley--Lieb algebras is trivial. (2) If the parameter $a \in R$ is a unit, then the homology of any Temperley--Lieb algebra is concentrated in degree zero.
\end{abstract}

\maketitle

\section{Introduction}

Let $a \in R$ be an element in a commutative unital ring. Intuitively, the Temperley--Lieb algebra $\tl_n(a)$ on $n$ strands is the $R$-algebra, whose underlying $R$-module has a basis given by isotopy classes of planar diagrams. The multiplication of two planar diagrams is given by gluing them together. Any circles appearing in the resulting diagram correspond to multiplication by $a \in R$. This is illustrated in \cref{fig:temperleyliebmultiplication}, and a precise definition is given in the next section (see \cref{temperleyliebalgebras}).

\begin{figure}[h]
  \centering
  \def\svgscale{.9}
\begingroup%
  \makeatletter%
  \providecommand\color[2][]{%
    \errmessage{(Inkscape) Color is used for the text in Inkscape, but the package 'color.sty' is not loaded}%
    \renewcommand\color[2][]{}%
  }%
  \providecommand\transparent[1]{%
    \errmessage{(Inkscape) Transparency is used (non-zero) for the text in Inkscape, but the package 'transparent.sty' is not loaded}%
    \renewcommand\transparent[1]{}%
  }%
  \providecommand\rotatebox[2]{#2}%
  \newcommand*\fsize{\dimexpr\f@size pt\relax}%
  \newcommand*\lineheight[1]{\fontsize{\fsize}{#1\fsize}\selectfont}%
  \ifx\svgwidth\undefined%
    \setlength{\unitlength}{411.02362205bp}%
    \ifx\svgscale\undefined%
      \relax%
    \else%
      \setlength{\unitlength}{\unitlength * \real{\svgscale}}%
    \fi%
  \else%
    \setlength{\unitlength}{\svgwidth}%
  \fi%
  \global\let\svgwidth\undefined%
  \global\let\svgscale\undefined%
  \makeatother%
  \begin{picture}(1,0.34482759)%
    \lineheight{1}%
    \setlength\tabcolsep{0pt}%
    \put(0,0){\includegraphics[width=\unitlength,page=1]{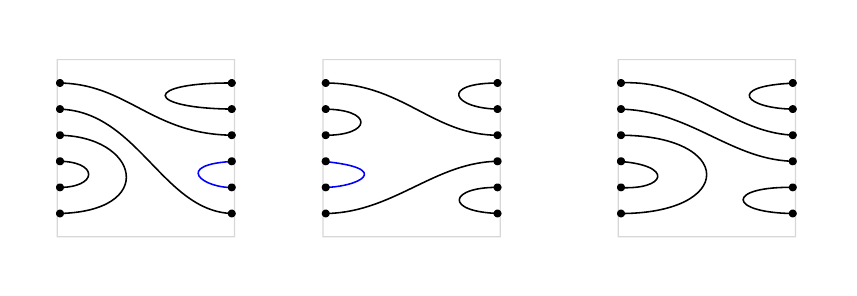}}%
    \put(0.62803804,0.15667219){\color[rgb]{0,0,0}\makebox(0,0)[lt]{\lineheight{1.25}\smash{\begin{tabular}[t]{l}=\hspace{5pt}{\color{blue} $a$} \textperiodcentered\end{tabular}}}}%
    \put(0.32079022,0.1568152){\color[rgb]{0,0,0}\makebox(0,0)[lt]{\lineheight{1.25}\smash{\begin{tabular}[t]{l}\textperiodcentered\end{tabular}}}}%
  \end{picture}%
\endgroup%

  \caption{Multiplication of two planar diagrams in $\tl_6(a)$.}
  \label{fig:temperleyliebmultiplication}
\end{figure}

The Temperley--Lieb algebra $\tl_n(a)$ has a natural augmentation $\epsilon_n: \tl_n(a) \to R$ that maps all non-identity planar diagrams to zero. The augmentation gives $R$ the structure of a $\tl_n(a)$-module, which we call the trivial module and denote by $\trivial$. In this paper, we examine the homology of Temperley--Lieb algebras with coefficients in this module
$$H_\star(\tl_n(a), \trivial) = \tor^{\tl_n(a)}_\star(\trivial, \trivial).$$
Recently, these homology groups have been studied by Boyd--Hepworth \cite{boydhepworth2020temperleylieb} and Randal-Williams \cite{randalwilliams2021aremarkonthehomologyoftemperleyliebalgebras}. Our new main finding is the following theorem, which removes all conditions in the vanishing results for the homology of Temperley--Lieb algebras on an odd number of strands obtained by Boyd--Hepworth.

\begin{thma}
  Let $R$ be a commutative unital ring, $a \in R$ and $n \in \n$ be odd. Consider the Temperley--Lieb algebra $\tl_n(a)$ on $n$ strands. Then, $$H_0(\tl_n(a), \trivial) = R \text{ and } H_\star(\tl_n(a), \trivial) = 0 \text{ for } \star > 0.$$
\end{thma}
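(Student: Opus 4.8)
The plan is to prove the statement by induction on the odd number $n$, with base case $n = 1$. There $\tl_1(a) = R$ is the ground ring itself, so $H_0 = R$ and $H_\star = 0$ for $\star > 0$ is immediate. For the inductive step I would exploit the recursive cellular structure of the Temperley--Lieb algebras, which relates $\tl_n(a)$ to $\tl_{n-2}(a)$ through the generator $U_{n-1}$, the planar diagram with a single cup and cap in the last two strands. The guiding heuristic is parity: because $n$ is odd, every planar diagram on $n$ strands carries at least one through--strand, and this constraint should let the induction descend from $n$ to $n-2$ while staying in the ``odd world'', never reaching the anomalous $0$-strand situation that is responsible for nontrivial homology in the even case. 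Thus I aim to show $H_\star(\tl_n(a), \trivial) \cong H_\star(\tl_{n-2}(a), \trivial)$ for odd $n \ge 3$, which together with the base case gives the theorem.

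The tool I would use is the two--sided ideal $J = \tl_n(a)\, U_{n-1}\, \tl_n(a)$ spanned by the diagrams with fewer than $n$ through--strands. Since the number of through--strands has the same parity as $n$ and changes in steps of two, the only diagram with $n$ through--strands is the identity, so $\tl_n(a)/J \cong \trivial$ and we obtain a short exact sequence $0 \to J \to \tl_n(a) \to \trivial \to 0$. On the other side, the standard cellular identity $U_{n-1}\,\tl_n(a)\,U_{n-1} \cong \tl_{n-2}(a)$ builds the smaller algebra into $J$. I would then either extract a long exact sequence in $\tor^{\tl_n(a)}_\star(\trivial, -)$ from the short exact sequence above, or, to make the computation fully explicit, resolve $\trivial$ by a Koszul--type complex $P_\star$ of free $\tl_n(a)$-modules generated by configurations of nested caps, with differential an alternating sum of cap--removals dictated by the relations $U_i^2 = aU_i$ and $U_iU_{i\pm1}U_i = U_i$; applying $\trivial \otimes_{\tl_n(a)} -$ then reduces the problem to the acyclicity of an explicit complex of free $R$-modules.

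The hard part will be the failure of $U_{n-1}$ to be idempotent when $a$ is not a unit: one has only $U_{n-1}^2 = a U_{n-1}$, so the naive contraction or recollement requires inverting $a$ — precisely the hypothesis of Boyd--Hepworth that Theorem~A is meant to remove. The decisive step is therefore to produce an \emph{$a$-independent} argument, and this is where oddness must enter essentially. Concretely, I would fix for each generating diagram a distinguished through--strand, available exactly because $n$ is odd, and use it to pivot a canonical cap in or out; this should yield a pairing of basis elements into acyclic two--term pieces whose contributions cancel in homology regardless of the value of $a$, leaving only the single surviving class in degree $0$. Phrased through the long exact sequence, the same parity input is what I expect to force the connecting and correction terms to vanish, so that $H_\star(\tl_n(a), \trivial) \cong H_\star(\tl_{n-2}(a), \trivial)$ and the induction closes. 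Establishing this vanishing — controlling the $a$-torsion purely by means of the parity of $n$, without ever dividing by $a$ — is the point I anticipate to be the most delicate.
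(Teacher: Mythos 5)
There is a genuine gap, and it sits exactly where you predicted it would: the inductive step. Two concrete problems. First, the ``standard cellular identity'' $U_{n-1}\,\tl_n(a)\,U_{n-1} \cong \tl_{n-2}(a)$ is false for general $a$: one has $U_{n-1}\tl_n(a)U_{n-1} = \tl_{n-2}(a)\cdot U_{n-1}$, but the induced multiplication is $(xU_{n-1})(yU_{n-1}) = a\,xy\,U_{n-1}$, i.e.\ the product on $\tl_{n-2}(a)$ twisted by $a$. This is an honest isomorphism of algebras only after rescaling by the idempotent $U_{n-1}/a$, which requires $a$ to be a unit --- so the recollement framework you want to run the induction through collapses precisely in the case the theorem is about. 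Second, the proposed repair --- pairing basis elements ``into acyclic two-term pieces'' by pivoting a cap along a distinguished through-strand --- is a hope, not an argument. Any such matching argument (algebraic Morse theory) requires the matched pairs to be joined by differentials with \emph{unit} coefficients; in every natural resolution of $\trivial$ the coefficient $a$ appears whenever two caps merge (from $U_i^2 = aU_i$), and a two-term piece of the form $R \xrightarrow{\;a\;} R$ is not acyclic when $a$ is not a unit. You give no mechanism ensuring your pivot only ever matches along $a$-free differentials, and the mere existence of a through-strand in each diagram (your parity input) does not supply one: that is a statement about individual diagrams, whereas the cancellation has to happen at the level of the complex.

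For comparison, the paper's proof makes the parity enter through a different combinatorial object: sets of pairwise non-adjacent cup positions (``innermost sets''). When the strand number is odd, \emph{every} innermost set leaves some position $i$ uncovered but adjacent to a marked cup $s$, and this permits an explicit left-module retraction $\cupmod(F_s) \to \cupmod(F)$, $c \mapsto c\,U_{i-1}U_{i-2}$ (or $c\,U_iU_{i+1}$), whose verification uses only the $a$-free relation $U_iU_{i\pm1}U_i = U_i$ --- this is the precise sense in which the argument ``never divides by $a$''. These retractions kill $H_{\star>0}(\tl_n(a), \cupmod(F))$ for all innermost $F$, and the cup modules are then assembled into a contractible complex (the cellular Davis complex) whose associated spectral sequence collapses, giving the theorem without any induction on $n$. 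Your short exact sequence $0 \to J \to \tl_n(a) \to \trivial \to 0$ is in fact the paper's defining sequence for $\trivial$, and in positive degrees the Davis complex is exactly a resolution of your ideal $J = \widehat{\tl}_n$ by direct sums of cup modules; so your starting point is sound, but the decisive content --- an $a$-independent computation of $\tor^{\tl_n(a)}_\star(\trivial, J)$ --- is what the innermost-set machinery provides and what your proposal leaves unproved.
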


\textsf{Previously known results.} Assuming that the parameter $a \in R$ used to define $\tl_n(a)$ is of the form $a = v + v^{-1}$ for a unit $v \in R$, Boyd--Hepworth proved that the homology of a Temperley--Lieb algebra on an odd number of $n$ strands is trivial in homological degrees $1 \leq \star \leq n-1$ \cite[Theorem B]{boydhepworth2020temperleylieb}. Theorem D of \cite{boydhepworth2020temperleylieb} furthermore establishes that the homology groups $H_\star(\tl_n(a), \trivial)$ are trivial in all positive degrees $\star > 0$, if $n = 2k+1$ is odd, the parameter $a = v + v^{-1} = 0$ is zero, and $R$ is a field whose characteristic does not divide $\binom{k}{r}$ for any $1 \leq r \leq k$. In the document \cite{randalwilliams2021aremarkonthehomologyoftemperleyliebalgebras}, Randal-Williams builds on the work of Boyd--Hepworth and shows that the assumptions on the parameter $a \in R$ can be removed using base change techniques \cite[Theorem B']{randalwilliams2021aremarkonthehomologyoftemperleyliebalgebras}. However, an invertibility condition on the binomials $\binom{k}{r}$ remains \cite[item $(ii)$ before Corollary 3.2]{randalwilliams2021aremarkonthehomologyoftemperleyliebalgebras}. \medskip

The proof of Theorem A presented here removes all of these conditions and gives an alternative argument for the vanishing results due to Boyd--Hepworth, as well as the strengthening obtained by Randal-Williams. \medskip

In addition to Theorem A, our methods allow us to prove a vanishing line for the homology of Temperley--Lieb algebras on an even number of strands. This vanishing line is weaker than the one obtained by Boyd--Hepworth \cite[Theorem B]{boydhepworth2020temperleylieb}. Boyd--Hepworth prove a slope $1$ vanishing line, the one we prove in this article is of slope $\frac{1}{2}$.

\begin{thmb}
  Let $R$ be a commutative unital ring, $a \in R$ and $n \in \n$ be even. Consider the Temperley--Lieb algebra $\tl_n(a)$ on $n$ strands. Then, $H_0(\tl_n(a), \trivial) = R$,
  $$H_\star(\tl_n(a), \trivial) = 0 \text{ for } 0 < \star < \frac{n}{2}$$
  and
  $$H_{\star + \frac{n}{2}} (\tl_n(a), \trivial) \cong H_{\star}(\tl_n(a), \cupmod(M))$$
  for $\star \geq 0$, where $\cupmod(M)$ is a certain $\tl_n(a)$-module (see \cref{cupmodules} and \cref{definitionofm}).
\end{thmb}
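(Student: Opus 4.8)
The plan is to obtain Theorem~B by \emph{dimension shifting} along an explicit length-$\frac{n}{2}$ resolution of the trivial module whose bottom syzygy is the Cup module $\cupmod(M)$. Concretely, I would produce an exact sequence of $\tl_n(a)$-modules
$$0 \to \cupmod(M) \to P_{\frac{n}{2}-1} \to \cdots \to P_1 \to P_0 \to \trivial \to 0$$
in which each $P_i$ is acyclic for the functor $H_\star(\tl_n(a),-)$, for instance a free or, more generally, a relatively projective $\tl_n(a)$-module, so that $\tor^{\tl_n(a)}_{\star}(\trivial, P_i) = 0$ for $\star > 0$. The modules $P_i$ should come from the filtration of planar diagrams by their number of through-strands: since for even $n$ the number of through-strands runs through the $\frac{n}{2}+1$ values $n, n-2, \dots, 2, 0$, one expects a resolution of length $\frac{n}{2}$ whose kernel records exactly the bottom stratum of diagrams with $0$ through-strands, and this stratum is what the Cup module $\cupmod(M)$ is built to encode. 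Constructing this sequence and identifying its kernel with $\cupmod(M)$ is the main structural task; it is also the precise point at which the parity of $n$ matters, since for odd $n$ the through-strand count bottoms out at $1$ rather than $0$ and the corresponding stratum contributes nothing --- this is the mechanism underlying the unconditional vanishing of Theorem~A.

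Granting the sequence, the degree-shift isomorphism is formal. Splitting it into short exact sequences $0 \to C_{i+1} \to P_i \to C_i \to 0$ with $C_0 = \trivial$ and $C_{\frac{n}{2}} = \cupmod(M)$, and using $\tor^{\tl_n(a)}_{>0}(\trivial, P_i) = 0$, the associated long exact sequences give, for $\star > 0$,
$$H_{\star + \frac{n}{2}}(\tl_n(a), \trivial) = \tor^{\tl_n(a)}_{\star + \frac{n}{2}}(\trivial, \trivial) \cong \tor^{\tl_n(a)}_{\star}(\trivial, \cupmod(M)) = H_{\star}(\tl_n(a), \cupmod(M)).$$
The edge case $\star = 0$ is not automatic and has to be inspected separately: it asks that the connecting map identify $H_{\frac{n}{2}}(\tl_n(a), \trivial)$ with $\trivial \otimes_{\tl_n(a)} \cupmod(M)$, which amounts to checking that the inclusion $\cupmod(M) \hookrightarrow P_{\frac{n}{2}-1}$ becomes the zero map after applying $\trivial \otimes_{\tl_n(a)} (-)$.

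The remaining assertions --- that $H_0(\tl_n(a), \trivial) = R$ and that the homology vanishes for $0 < \star < \frac{n}{2}$ --- are exactly the statement that the tensored-down complex
$$\cdots \to \trivial \otimes_{\tl_n(a)} P_1 \to \trivial \otimes_{\tl_n(a)} P_0 \to R \to 0$$
is exact in degrees below $\frac{n}{2}$, i.e.\ that these low-degree $P_i$ form the beginning of a resolution of $R$ after taking $\trivial$-coinvariants. This is the one genuinely non-formal input, and I expect the hard part to lie here: it should follow from a high-connectivity statement for the associated semisimplicial set of planar injective words, in the spirit of Boyd--Hepworth, combined with a flat base-change reduction to a convenient ground ring as used by Randal-Williams to eliminate the hypotheses on $a$ and on the binomial coefficients $\binom{k}{r}$. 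Once the resolution is built, its kernel identified with $\cupmod(M)$, and this connectivity established, both the slope $\frac{1}{2}$ vanishing line and the degree-shift isomorphism follow from the dimension-shifting formalism above.
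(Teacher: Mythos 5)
Your formal skeleton is in fact exactly how the paper argues: the cellular Davis complex $\tl\frakd$ of \cref{cellulardaviscomplextl}, whose degree-$\alpha$ term is $\bigoplus \cupmod(F)$ over innermost sets $F$ of size $\alpha$, is proved contractible with $H_0(\tl\frakd)=\trivial$ (\cref{homologycellulardaviscomplex}), and for an even number of strands its top term is the single module $\cupmod(M)$; this is precisely a length-$\frac{n}{2}$ resolution of $\trivial$ by $\tor$-acyclic modules with bottom syzygy $\cupmod(M)$, and the spectral sequence in the paper's proof of \cref{theoremtleven} is your dimension shift. The gap is that every substantive ingredient of this skeleton is deferred or misdirected in your plan. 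First, the resolution itself: the through-strand filtration does not produce it. The $0$-through-strand diagrams form a \emph{two-sided} ideal which, as a left module, splits into many copies of the cell module, one for each planar matching on the right-hand side; $\cupmod(M)$ is only the one summand whose right side is the all-innermost-cups configuration, so ``the bottom stratum is what $\cupmod(M)$ encodes'' conflates a left ideal with a two-sided one. Worse, these building blocks are not $\tor$-acyclic: $\cupmod(M)$ is itself such a cell module, and its homology is exactly the (possibly nonvanishing, by Boyd--Hepworth's Theorem C) high-degree homology the theorem describes, so no filtration whose pieces are cell modules can supply the acyclic $P_i$. The paper's resolution is instead indexed by sets of \emph{marked innermost right cups}, and its exactness is proved by splitting it, planar diagram by planar diagram, into augmented chain complexes of simplices.

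Second, the acyclicity of the middle terms is asserted (``free or relatively projective'') but never addressed, and it is a genuine step: the paper proves it in \cref{retraction} and \cref{vanishing} by exhibiting each non-maximal $\cupmod(F)$ as a direct summand of $\tl$ via an explicit one-sided inverse built from the relations $U_iU_{i\pm1}U_i=U_i$; this is also exactly where the even/odd dichotomy enters (for an odd number of strands there is no maximal exception, which is what gives Theorem A). Third, you misidentify where the difficulty lies. Given the resolution and the acyclicity, the vanishing for $0<\star<\frac{n}{2}$ requires no connectivity theorem and no base change: it follows from the computation $\trivial\otimes_{\tl}\cupmod(F)=0$ for every nonempty innermost $F$ on more than two strands (\cref{zerothhomology}, a few lines using relation $ii)$ of \cref{temperleyliebalgebras}), which makes the tensored-down complex literally zero in the relevant degrees; your $\star=0$ edge case is handled the same way, with a separate easy check on two strands. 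Importing the Boyd--Hepworth connectivity of planar injective words together with Randal-Williams' base change at this point would make the argument rest on precisely the machinery this theorem is meant to give an alternative to, and would amount to quoting a slope-$1$ vanishing line to establish a slope-$\frac{1}{2}$ one --- while the genuinely new content of Theorem B, the isomorphism $H_{\star+\frac{n}{2}}(\tl_n(a),\trivial)\cong H_\star(\tl_n(a),\cupmod(M))$, would still hang on the resolution you have not constructed.
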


The coefficient $\tl_n(a)$-module $\cupmod(M)$ in Theorem B is, in general, not isomorphic to the Fineberg module appearing in the description of the higher homology groups obtained by Boyd--Hepworth \cite[Theorem 5.1]{boydhepworth2020temperleylieb}.

\begin{rem}
  In contrast with Temperley--Lieb algebras on an odd number of strands, the homology of a Temperley--Lieb algebras on an even number of $n$ strands can be non-trivial in positive degrees. This was established in Theorem C of \cite{boydhepworth2020temperleylieb}, which states that $H_{n-1}(\tl_n(a), \trivial)$ is non-zero if $n$ is even and $a = v + v^{-1} \in R$ is not a unit.
\end{rem}

Together, Theorem A and B give an alternative proof of the fact that the family of Temperley--Lieb algebras $\{\tl_n(a)\}_{n \in \n}$ satisfies homological stability and that the stable homology is trivial
$$\operatorname{colim}_{n \to \infty} H_\star(\tl_n(a), \trivial) = \begin{cases}  R, & \text{ if } \star = 0,\\ 0, & \text{ if } \star > 0. \end{cases}$$
This also follows from the work of Boyd--Hepworth and is discussed in their paper \cite{boydhepworth2020temperleylieb}. Homological stability questions for groups play a fundamental role in algebraic topology and algebraic K-theory, e.g.\ \cite{MR514863, MR569072, MR586429, MR786348, charney1987ageneralizationofatheoremofvogtmann, MR2113904, MR2220689, MR2367024, MR3572348, MR4028513}. This article can be seen as a contribution to the set of ideas formulated by Hepworth \cite{hepworth2020homologicalstabilityforiwahoriheckealgebras}, Boyd--Hepworth \cite{boydhepworth2020temperleylieb} and Boyd--Hepworth--Patzt \cite{boydhepworthpatzt2020thehomologyofthebraueralgebras} that aim to extend these techniques to families of abstract algebras.\medskip

Building on Theorem A and B we can moreover formulated an alternative argument for \cite[Theorem A]{boydhepworth2020temperleylieb}, which shows that the homology of any Temperley--Lieb algebra is trivial in positive degrees if the parameter $a \in R$ is a unit.

\begin{thmc}[{\cite{boydhepworth2020temperleylieb}, Boyd--Hepworth}]
  Let $R$ be a commutative unital ring, $a \in R$ and $n \in \n$. Consider the Temperley--Lieb algebra $\tl_n(a)$ on $n$ strands. If $a \in R$ is a unit, then
  $$H_0(\tl_n(a), \trivial) = R \text{ and } H_\star(\tl_n(a), \trivial) = 0 \text{ for } \star > 0.$$
\end{thmc}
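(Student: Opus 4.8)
The plan is to deduce Theorem~C from Theorems~A and~B, splitting into the odd and even cases. For $n$ odd there is nothing to prove: Theorem~A already yields $H_0(\tl_n(a), \trivial) = R$ and $H_\star(\tl_n(a), \trivial) = 0$ for $\star > 0$ with no condition on $a$. Thus the unit hypothesis is only needed when $n$ is even, and there Theorem~B does most of the work. It supplies $H_0(\tl_n(a), \trivial) = R$ and the vanishing $H_\star(\tl_n(a), \trivial) = 0$ for $0 < \star < \tfrac{n}{2}$, and it reduces the remaining degrees to the isomorphism
$$H_{t + \frac{n}{2}}(\tl_n(a), \trivial) \cong H_t(\tl_n(a), \cupmod(M)), \qquad t \geq 0.$$
Consequently it suffices to show that, when $a \in R$ is a unit, $H_t(\tl_n(a), \cupmod(M)) = \tor^{\tl_n(a)}_t(\trivial, \cupmod(M)) = 0$ for every $t \geq 0$. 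Note that the case $t = 0$ is genuinely needed, since its left-hand side is $H_{n/2}$ and $n/2 > 0$.

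For this cup-module vanishing I would exploit the geometric role of the unit $a$: closing a cup against its cap produces a circle, and hence the scalar $a$. The module $\cupmod(M)$ is, by construction, built from diagrams carrying a distinguished cup, and the corresponding cap-cup generator $e \in \tl_n(a)$ that closes this cup acts on $\cupmod(M)$ as multiplication by $a$, that is, as $a \cdot \mathrm{id}$; at the same time $e$ lies in the augmentation ideal, $\epsilon_n(e) = 0$. I would then invoke the standard fact about the two-sided bar resolution computing $\tor^{\tl_n(a)}_\star(\trivial, \cupmod(M))$: for any algebra element $e$, right multiplication by $e$ on the first variable and left multiplication by $e$ on the second variable are chain-homotopic, and therefore induce the same endomorphism of $\tor$. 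On the $\trivial$ side right multiplication by $e$ is the zero map, because $t \mapsto t\,e = \epsilon_n(e)\,t = 0$; on the $\cupmod(M)$ side left multiplication by $e$ is $a \cdot \mathrm{id}$. Equating the two induced maps gives $a \cdot \mathrm{id} = 0$ on $\tor^{\tl_n(a)}_\star(\trivial, \cupmod(M))$, and since $a$ is a unit this forces the Tor groups to vanish in every degree, completing the even case.

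The main obstacle is the input to this argument rather than the homotopy formalism, which is routine: one must pin down the structure of $\cupmod(M)$ from its definition and verify precisely that a single cap-cup generator acts on it as the scalar $a$ and lies in the augmentation ideal. The delicate point in applying the bar-resolution trick is that the generators of $\tl_n(a)$ are \emph{not} central, so one cannot naively claim that $e$ acts identically through either tensor factor; the equality of the two induced maps must be produced by the explicit chain homotopy on the bar complex, and one has to check that the action of $e$ on $\cupmod(M)$ really is multiplication by $a$ on the nose, with no extra hypothesis (such as invertibility of quantum integers or binomials) creeping in. Once that module-theoretic fact is secured, everything else is bookkeeping layered on top of Theorems~A and~B.
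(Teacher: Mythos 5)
Your reduction is exactly the paper's first step: by Theorems~A and~B it suffices to show, for an even number of strands and $M$ the unique maximal innermost set, that $H_t(\tl, \cupmod(M)) = 0$ for \emph{all} $t \geq 0$ when $a$ is a unit (and you are right that $t=0$ is genuinely needed). The gap is in how you prove this vanishing. Your key claim --- that the cap-cup generator $e$ closing the distinguished cup acts on $\cupmod(M)$ by \emph{left} multiplication as $a \cdot \mathrm{id}$ --- is false. The module $\cupmod(M)$ is the left ideal $\tl \cdot \prod_{i \in M} U_i$: its marked cups sit on the \emph{right-hand} side of the diagrams, while the left module structure glues new diagrams onto the \emph{left-hand} side, so left multiplication never closes a marked cup. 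Concretely, in $\tl_4(a)$ with $M = \{0,2\}$ one has $U_0 \cdot (U_1U_0U_2) = (U_0U_1U_0)U_2 = U_0U_2$, a different basis diagram from $a\,U_1U_0U_2$. Worse, no substitute element exists: for $a = 1$ the module $\cupmod(\{0,2\})$ is free of rank $2$ with basis $U_0U_2$, $U_1U_0U_2$, and every non-identity planar diagram of $\tl_4(1)$ acts by one of the two matrices $\left(\begin{smallmatrix}1&1\\0&0\end{smallmatrix}\right)$, $\left(\begin{smallmatrix}0&0\\1&1\end{smallmatrix}\right)$ (the leftmost factor of a product forces a zero row), and no $R$-linear combination of these equals the identity matrix; so for the unit $a=1$ \emph{no} element of the augmentation ideal acts as $a \cdot \mathrm{id}$, and the input your argument needs cannot be secured. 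Independently of this, the bar-complex fact you invoke is standard only for \emph{central} $e$: for non-central $e$ the maps $m \mapsto me$ and $n \mapsto en$ are not module maps in general, and the explicit homotopy $dh + hd$ produces commutator correction terms of the form $\cdots \otimes (a_ie - ea_i) \otimes \cdots$, so the ``routine formalism'' you defer to is not available.

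What is true --- and is what the paper exploits --- is the right-handed version of your circle-closing intuition: \emph{right} multiplication by $U_0$ commutes with the left action, preserves $\cupmod(M)$, and equals $a \cdot \mathrm{id}$ there, because $(\prod_{i \in M} U_i)U_0 = a \prod_{i \in M} U_i$. The paper factors this endomorphism as $\cupmod(M) \hookrightarrow \cupmod(M_0) \xrightarrow{\,\cdot\, U_0\,} \cupmod(M)$, where $M_0 = M \setminus \{0\}$ is no longer maximal; hence $H_\star(\tl, \cupmod(M_0))$ vanishes for $\star > 0$ by \cref{vanishing}, and its $H_0$ is controlled by \cref{zerothhomology}. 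Functoriality of $H_\star(\tl, -)$ then gives that multiplication by $a$ kills $H_\star(\tl, \cupmod(M))$, which vanishes since $a$ is a unit --- no bar resolution, no centrality issue, all maps involved being honest maps of left $\tl$-modules. Replacing your left-multiplication claim and homotopy argument by this factorization turns your outline into the paper's proof.
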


\textsf{Comparison with work of Boyd--Hepworth \cite{boydhepworth2020temperleylieb} and Boyd \cite{boyd2020thelowdimensionalhomology}.} The general strategy of proof that we employ is similar to the one used by Boyd--Hepworth, i.e.\ we construct a certain highly connected $\tl_n(a)$-chain complex and study spectral sequences attached to it. The chain complex that we use is different from the complex of planar injective words studied by Boyd--Hepworth \cite{boydhepworth2020temperleylieb}. Indeed, we will use the ``cellular Davis complex'' $\tl \frakd$ of the Temperley--Lieb algebra $\tl_n(a)$. The structure of the complex $\tl\frakd$ is ``sensitive'' to the question whether the number of strings of a Temperley--Lieb algebra is even or odd. This is what enables us to prove Theorem A above. The chain complex $\tl \frakd$ has a simple diagrammatic description and can be seen as an algebraic analogue of the classical Davis complex of the symmetric group equipped with the ``Coxeter cell'' CW-structure \cite[Chapter 7 and 8]{davis2008thegeometryandtopologyofcoxetergroups}. The classical Davis complex of a finite symmetric group $S_n$ is a contractible CW-complex with $S_n$-action. We prove the following Temperley--Lieb analogue.

\begin{thmd}
  The ``cellular Davis complex'' $\tl\frakd$ of a Temperley--Lieb algebra (\cref{cellulardaviscomplextl}) is a contractible $\tl_n(a)$-chain complex with $H_0(\tl \frakd) = \trivial$.
\end{thmd}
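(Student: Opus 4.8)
The plan is to work directly with the explicit diagrammatic model of $\tl\frakd$ introduced in \cref{cellulardaviscomplextl} and to construct an $R$-linear contracting homotopy by hand, in analogy with the (non-equivariant) collapse of the Coxeter polytope that underlies the contractibility of the classical Davis complex of $S_n$. Recall from that analogy that in degree $p$ the module $\tl\frakd_p$ should be a direct sum, over the rank-$p$ subsets $T \subseteq \{U_1,\dots,U_{n-1}\}$ of Temperley--Lieb generators, of cyclic $\tl_n(a)$-modules induced from the diagrammatic parabolic subalgebras $\tl_T(a)$ generated by $\{U_i : i \in T\}$, each generated by a cell $x_T$; in particular $\tl\frakd_0 = \tl_n(a)\cdot x_\emptyset$ is the regular module, and the differential $d$ is the diagrammatic incidence map sending $x_T$ to a signed sum of its codimension-one faces. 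Since the homotopy I aim to produce will be only $R$-linear and not $\tl_n(a)$-equivariant, it is most natural to exhibit it on the $R$-basis of $\tl\frakd$ given by pairs consisting of a planar diagram together with an indexing subset $T$.

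First I would identify $H_0$. As $\tl\frakd_0 = \tl_n(a)$ is the regular module, $H_0(\tl\frakd) = \operatorname{coker}(d_1)$, so the claim $H_0(\tl\frakd) = \trivial$ amounts to showing that the image of $d_1$ coincides with the augmentation ideal $\ker(\epsilon_n)$. Diagrammatically, the boundary of each edge-cell $x_{\{i\}}$ records the difference between the two vertices it connects, which translates into an element of the form (identity diagram) minus (a diagram lying in $\ker \epsilon_n$); I would check that these boundaries span $\ker \epsilon_n$ exactly, so that $\epsilon_n$ descends to the asserted isomorphism $H_0(\tl\frakd) \cong R = \trivial$ with its trivial module structure. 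This is the routine half of the statement.

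The heart of the proof is contractibility above degree zero, and this is the step I expect to be the main obstacle. The plan is to single out the last generator $U_{n-1}$ and to build an $R$-linear contracting homotopy $h_\star\colon \tl\frakd_\star \to \tl\frakd_{\star+1}$ by a careful ``append $U_{n-1}$'' rule on the $R$-basis: a basis element is sent to the basis element obtained by adjoining $U_{n-1}$ to its data whenever this is diagrammatically admissible, and to zero otherwise. This is the algebraic shadow of an elementary collapse of the permutohedron towards its $U_{n-1}$-free facet, and it must be arranged so that exactly one basis element --- the identity vertex $x_\emptyset$ --- remains critical, accounting for $H_0 = \trivial$. The content is then the homotopy identity $dh + hd = \operatorname{id} - \iota\epsilon$, where $\iota\epsilon$ projects onto the surviving degree-zero cell, which I would verify by a finite case analysis on diagrams. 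The genuinely Temperley--Lieb-specific difficulty, absent in the group case, is that appending $U_{n-1}$ is not invertible: one repeatedly meets the relations $U_{n-1}^2 = a\,U_{n-1}$ and $U_{n-1}U_{n-2}U_{n-1} = U_{n-1}$, and closed loops contribute factors of $a \in R$. The identity must therefore hold for an arbitrary, possibly non-invertible or zero, parameter $a$, so the $a$-weighted correction terms produced by $dh$ and by $hd$ have to be matched and cancelled explicitly. Exhibiting this cancellation --- rather than invoking the geometric contractibility of the polytope, which is unavailable over a general ring --- is where the real work lies.

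Finally I would assemble the pieces. The $R$-linear contraction shows that the augmented complex $\tl\frakd \to \trivial$ is acyclic and even $R$-split, so $H_\star(\tl\frakd) = 0$ for $\star > 0$, while the degree-zero computation gives $H_0(\tl\frakd) = \trivial$; together these say precisely that $\tl\frakd$ is a contractible $\tl_n(a)$-chain complex with $H_0(\tl\frakd) = \trivial$. I would stress that the contraction is deliberately only $R$-linear: this is exactly the property needed downstream, since it allows $\tl\frakd$ to serve as an $R$-split resolution of $\trivial$ by cyclic $\tl_n(a)$-modules, which is what feeds the spectral sequences computing $\tor^{\tl_n(a)}_\star(\trivial,\trivial)$ in Theorems A and B.
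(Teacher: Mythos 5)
Your reconstruction of the complex does not match \cref{cellulardaviscomplextl}, and the discrepancy matters. In the paper's complex the sum runs only over \emph{innermost} subsets $F \subseteq \setn$ (indices pairwise at distance at least $2$), the summands are the left ideals $\cupmod(F) = \tl \cdot \prod_{i \in F} U_i$ rather than modules induced from parabolic subalgebras, and --- crucially --- the differential never multiplies in the algebra: it sends a marked diagram $(A,F)$ to the signed sum of the $(A,F_s)$ for $s \in F$, keeping the underlying planar diagram $A$ completely fixed. Consequently no factors of $a$, no relation $U_{n-1}^2 = aU_{n-1}$ or $U_{n-1}U_{n-2}U_{n-1} = U_{n-1}$, and no closed loops ever enter the differential; the ``genuinely Temperley--Lieb-specific difficulty'' that you identify as the heart of the proof is not present in this complex at all. (Similarly, the boundary of a $1$-cell $(A,\{i\})$ is the single term $(A,\emptyset)$, not a difference of two vertices; your conclusion $\operatorname{im}(\delta_1) = \widehat{\tl} = \ker\epsilon$ is nevertheless correct, since the images of the $\cupmod(\{i\})$ span exactly the non-identity diagrams.)

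The genuine gap is that your contracting homotopy, built from one fixed generator, cannot satisfy $\delta h + h\delta = \operatorname{id} - \iota\epsilon$. If ``appending $U_{n-1}$'' means right multiplication of the diagram, then every term of $(\delta h + h\delta)(A,F)$ has underlying diagram $A U_{n-1} \neq A$, so the identity fails immediately (and $(AU_{n-1}, F \cup \{n-1\})$ need not even lie in the complex when $n-2 \in F$). If instead it means ``add the mark $n-1$ when $A$ already has an innermost right cup there,'' then on any diagram $A$ with $n-1 \notin F(A)$ --- for instance $A = U_0$ in $\tl_4(a)$ --- both $h$ and $h\delta$ vanish identically, so $\delta h + h\delta = 0 \neq \operatorname{id}$ on the (nonzero, though acyclic) part of the complex carried by $A$. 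Any repair forces a \emph{diagram-dependent} choice of which cup to add, say the minimal element of $F(A)$, and at that point you have rediscovered the paper's argument: because $\delta$ fixes the underlying diagram, $\tl\frakd$ splits as a complex of $R$-modules as $\bigoplus_A \tl\frakd(A)$ over planar diagrams $A$; the summand for $A = id$ is $R$ concentrated in degree $0$, while for $A \neq id$ the summand is the augmented chain complex of the simplex on the vertex set $F(A)$, shifted up by one, hence exact (equivalently, contracted by the cone homotopy that adds the mark $\min F(A)$). This splitting is the missing idea; it gives contractibility over any ring $R$ and any parameter $a$ with no case analysis, and the $\tl$-module identification $H_0(\tl\frakd) = \tl/\widehat{\tl} = \trivial$ then follows exactly as you outline.
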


From this perspective, the content of this paper is inspired by and in direct analogy with the approach that Boyd employed in \cite{boyd2020thelowdimensionalhomology} to derive formulas for the low-dimensional homology of Coxeter groups. The first two chapters of the author's thesis \cite{sroka2021thesis} make this analogy precise and explain how the chain complex $\tl \frakd$ was discovered via this analogy. \medskip

\noindent \textbf{Outline.} In \cref{sec:temperley-lieb-algebras}, we define the Temperley--Lieb algebra $\tl_n(a)$ and its trivial module $\trivial$. At the beginning of \cref{sec:vanishing-theorems-for-the-homology-of-tl-algebras}, we introduce a class of left submodules $\cupmod(F)$ of $\tl_n(a)$ which will play an important role in the rest of this article. In \cref{subsec:the-cellular-davis-complex-of-tl}, we use these modules to define the ``cellular Davis complex'' $\tl \frakd$ of a Temperley--Lieb algebra and establish that $\tl \frakd$ is contractible (Theorem D). In \cref{subsec:homology-with-coefficients-in-cup-modules}, we study the homology of the Temperley--Lieb algebra $\tl_n(a)$ with coefficients in $\cupmod(F)$. These homology groups will appear on the $E_1$-page of the spectral sequences that we use in \cref{subsec:proof-of-theorem-a-b-c} to formulate the proofs of Theorem A, B and C. \medskip

\noindent \textbf{Acknowledgements.} This article is based on the second chapter of my PhD thesis \cite{sroka2021thesis} written at the University of Copenhagen under the direction of Nathalie Wahl. All results presented here are contained in  \cite{sroka2021thesis}. I am grateful to Nathalie Wahl, Richard Hepworth and Rachael Boyd for their constructive feedback and many enlightening discussions while completing this work. I would like to thank Richard Hepworth for fruitful conversations about related ideas \cites{hepworth2020homologicalstabilityforiwahoriheckealgebras}{boyd2020thelowdimensionalhomology}[Chapter 3]{sroka2021thesis} that eventually inspired this project. For clarifying conversations and helpful comments, I would like to thank Jonas Stelzig. I am grateful to the anonymous referee for their suggestions and careful reading of this work. This research was primarily supported by the European Research Council (ERC) under the European Union’s Horizon 2020 research and innovation programme (grant agreement No.772960), as well as by the Danish National Research Foundation through the Centre for Symmetry and Deformation (DNRF92) and the Copenhagen Centre for Geometry and Topology (DNRF151). It was partially supported by NSERC Discovery Grant A4000 in connection with a Postdoctoral Fellowship at McMaster University, and by the Swedish Research Council under grant no.\ 2016-06596 while the author was in residence at Institut Mittag-Leffler in Djursholm, Sweden during the semester \emph{Higher algebraic structures in algebra, topology and geometry}. I gratefully acknowledge this support.

\section{Temperley--Lieb algebras} \label{sec:temperley-lieb-algebras}

This section contains necessary background knowledge on Temperley--Lieb algebras that we mainly learned from Kassel--Turaev \cite{kasselturaev2008braidgroups} and the exposition in Boyd--Hepworth \cite{boydhepworth2020temperleylieb}. In 1971, Temperley--Lieb introduced these algebras in their article \cite{temperleylieb1971relationsbetween}.

\begin{definition}[{\cite[Definition 5.24]{kasselturaev2008braidgroups}, \cite[Definition 2.1]{boydhepworth2020temperleylieb}}]\label{temperleyliebalgebras}
  Let $R$ be a commutative unital ring, $a \in R$ and $n \in \n$. The \emph{Temperley--Lieb algebra $\tl_{n}(a)$ with parameter $a \in R$} is the $R$-algebra with generators $U_1, \dots, U_{n-1}$ and the following relations
  \begin{enumerate}
  \item $U_iU_j = U_jU_i$, if $|i - j| \geq 2$.
  \item $U_iU_jU_i = U_i$, if $j = i \pm 1$.
  \item $U_i^2 = a U_i$ for all $i$.
  \end{enumerate}
  The unit $1$ corresponds to the empty product of the generators $U_i$. Note that $$\tl_0(a) = \tl_1(a) = R.$$
\end{definition}

Jones \cite{jones1985apolynomialinvariantforknotsviavonneumannalgebras} used Temperley--Lieb algebras to define the polynomial invariant for knots, which we now call the Jones polynomial. The following diagrammatic interpretation is due to Kauffman \cite{kauffman1987statemodelsandthejonespolynomial, kauffman1990aninvariantofregularisotopy}. We follow \cite{kasselturaev2008braidgroups}, Section 5.7.4, in our exposition.

\begin{definition}
  A \emph{planar diagram of $n \geq 1$ arcs $D = \{\gamma_1, \dots, \gamma_n\}$} in $[0,1] \times \mathbb{R}$ is a disjoint union of $n$ smoothly embedded arcs $\gamma_i: [0,1] \to [0,1] \times \mathbb{R}$ such that:
  \begin{enumerate}
  \item The images of any two arcs $\gamma_i$ and $\gamma_j$ are pairwise disjoint.
  \item The points $\gamma_i(0)$ and $\gamma_i(1)$ are a subset of the points $$\{(0,1), \dots, (0,n), (1,1), \dots, (1,n)\}.$$
  \item The tangent vectors at $\gamma_i(0)$ and $\gamma_i(1)$ are parallel to the x-axis $\mathbb{R} \times 0$.
  \end{enumerate}
\end{definition}

Let $a \in R$ and $P_n(a)$ be the free $R$-module spanned by the set of isotopy classes $[D]$ of planar diagrams $D$. We will now explain how the module $P_n(a)$ can be equipped with the structure of an associative $R$-algebra. Given two isotopy classes of planar diagrams $[D]$ and $[D']$, we obtain a diagram in $[0,1] \times \mathbb{R}$ by pasting $D$ into $[0,\frac{1}{2}] \times \mathbb{R}$ and $D'$ into $[\frac{1}{2}, 1] \times \mathbb{R}$. We can choose representatives $D \in [D]$ and $D' \in [D']$, such that the resulting diagram consists of a planar diagram $D \circ D'$ and $k(D,D') \geq 0$ circles. The product of $[D]$ and $[D']$ is defined as
$$[D] \cdot [D'] = a^{k(D,D')}[D \circ D'].$$
The reader is invited to revisit \cref{fig:temperleyliebmultiplication}, which illustrates this definition.

\begin{theorem}[{\cite[Theorem 5.34]{kasselturaev2008braidgroups}, \cite[Section 2]{boydhepworth2020temperleylieb}}] \label{diagraminterpretation}
  The Temperley--Lieb algebra $\tl_n(a)$ is isomorphic to the $R$-algebra of planar diagrams $P_n(a)$. The isomorphism is given by mapping a generator $U_i \in \tl_n(a)$ to the isotopy class of the following planar diagram.
  \begin{figure}[h]
    \centering
    \def\svgscale{.9}
\begingroup%
  \makeatletter%
  \providecommand\color[2][]{%
    \errmessage{(Inkscape) Color is used for the text in Inkscape, but the package 'color.sty' is not loaded}%
    \renewcommand\color[2][]{}%
  }%
  \providecommand\transparent[1]{%
    \errmessage{(Inkscape) Transparency is used (non-zero) for the text in Inkscape, but the package 'transparent.sty' is not loaded}%
    \renewcommand\transparent[1]{}%
  }%
  \providecommand\rotatebox[2]{#2}%
  \newcommand*\fsize{\dimexpr\f@size pt\relax}%
  \newcommand*\lineheight[1]{\fontsize{\fsize}{#1\fsize}\selectfont}%
  \ifx\svgwidth\undefined%
    \setlength{\unitlength}{240.94488189bp}%
    \ifx\svgscale\undefined%
      \relax%
    \else%
      \setlength{\unitlength}{\unitlength * \real{\svgscale}}%
    \fi%
  \else%
    \setlength{\unitlength}{\svgwidth}%
  \fi%
  \global\let\svgwidth\undefined%
  \global\let\svgscale\undefined%
  \makeatother%
  \begin{picture}(1,0.58823529)%
    \lineheight{1}%
    \setlength\tabcolsep{0pt}%
    \put(0,0){\includegraphics[width=\unitlength,page=1]{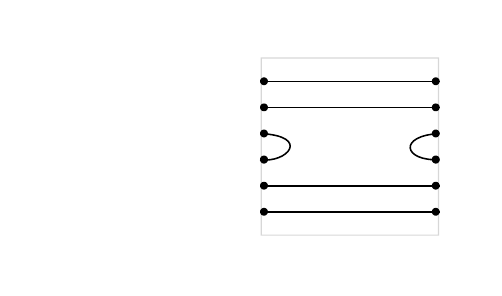}}%
    \put(0.88756916,0.15679681){\color[rgb]{0,0,0}\makebox(0,0)[lt]{\lineheight{1.25}\smash{\begin{tabular}[t]{l}\small $1$\end{tabular}}}}%
    \put(0.88775854,0.2608026){\color[rgb]{0,0,0}\makebox(0,0)[lt]{\lineheight{1.25}\smash{\begin{tabular}[t]{l}\small $i$\end{tabular}}}}%
    \put(0.88756916,0.41669707){\color[rgb]{0,0,0}\makebox(0,0)[lt]{\lineheight{1.25}\smash{\begin{tabular}[t]{l}\small $n$\end{tabular}}}}%
    \put(0.69620985,0.37954551){\color[rgb]{0,0,0}\makebox(0,0)[lt]{\lineheight{1.25}\smash{\begin{tabular}[t]{l}$\vdots$\end{tabular}}}}%
    \put(0.69620985,0.17248662){\color[rgb]{0,0,0}\makebox(0,0)[lt]{\lineheight{1.25}\smash{\begin{tabular}[t]{l}$\vdots$\end{tabular}}}}%
    \put(0.06025853,0.29328372){\color[rgb]{0,0,0}\makebox(0,0)[lt]{\lineheight{1.25}\smash{\begin{tabular}[t]{l}$U_i \hspace{40pt} \mapsto$\end{tabular}}}}%
  \end{picture}%
\endgroup%

    \caption{Translating between algebraic and diagrammatic definition of $\tl_n(a)$.}
    \label{fig:algebraicvsdiagrammaticdefinition}
  \end{figure}
\end{theorem}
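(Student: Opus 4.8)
The plan is to build the homomorphism in one direction, from the presented algebra $\tl_n(a)$ to the diagram algebra $P_n(a)$, verify that it is a well-defined surjective $R$-algebra homomorphism, and then force it to be an isomorphism by a rank count against the Catalan number.

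First I would define $\phi\colon \tl_n(a) \to P_n(a)$ on generators by sending $U_i$ to the isotopy class $[D_i]$ of the planar diagram depicted in \cref{fig:algebraicvsdiagrammaticdefinition}, i.e.\ the cap--cup joining the two boundary points at heights $i$ and $i+1$ on each side, with all remaining points connected by horizontal through-strands. To see that $\phi$ extends to an $R$-algebra homomorphism it suffices, since $\tl_n(a)$ is presented by generators and relations, to check that the classes $[D_i]$ satisfy the three relations of \cref{temperleyliebalgebras}. Each verification is a direct diagrammatic computation: relation (i), $[D_i][D_j] = [D_j][D_i]$ for $|i-j| \geq 2$, holds because caps and cups at non-adjacent heights paste together without interference and produce no circles; relation (ii), $[D_i][D_j][D_i] = [D_i]$ for $j = i \pm 1$, is the ``zig-zag straightening'' move, in which the composite isotopes back to $D_i$ with no circle created; and relation (iii), $[D_i]^2 = a[D_i]$, holds because pasting $D_i$ to itself closes off exactly one circle, so that $k(D_i,D_i) = 1$ contributes the factor $a$ while the underlying diagram is unchanged.

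Next I would prove that $\phi$ is surjective. Since $P_n(a)$ is by definition the free $R$-module on the isotopy classes of planar diagrams, it is enough to realise every such class, up to a power of $a$, as a product of the elementary diagrams $D_i$. I would argue by induction on $n$, peeling off an innermost cap on the left boundary and an innermost cup on the right boundary and recognising the remainder as a product of generators by the inductive hypothesis, with all circles produced during the pasting absorbed into powers of $a$. This exhibits every basis element of $P_n(a)$ as $\phi$ of a monomial in the $U_i$, giving surjectivity.

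The main obstacle is the opposite rank inequality: that $\tl_n(a)$ is generated as an $R$-module by at most $C_n$ elements, where $C_n = \tfrac{1}{n+1}\binom{2n}{n}$ is the number of isotopy classes of planar diagrams (equivalently, non-crossing perfect matchings of the $2n$ boundary points), so that $P_n(a)$ is free of rank $C_n$. For this I would produce an explicit normal form: using relations (i)--(iii) I would rewrite an arbitrary word in the generators as a scalar multiple of a reduced monomial of the shape
\[
(U_{i_1}U_{i_1-1}\cdots U_{j_1})(U_{i_2}U_{i_2-1}\cdots U_{j_2})\cdots(U_{i_k}U_{i_k-1}\cdots U_{j_k})
\]
with strictly increasing sequences $i_1 < i_2 < \cdots < i_k$ and $j_1 < j_2 < \cdots < j_k$, and then count that the number of such reduced monomials equals $C_n$. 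The reduction step is where the three relations do all the work and is the combinatorial heart of the argument, while the count of reduced monomials against the Catalan number is a bijective bookkeeping exercise. This yields $\operatorname{rank}_R \tl_n(a) \leq C_n$. Finally I would combine the pieces. Let $S$ be the set of normal-form monomials; it spans $\tl_n(a)$ and $|S| \leq C_n$. Then $\phi(S)$ spans the free module $P_n(a)$ of rank $C_n$, and since any generating set of a free module has at least rank-many elements we get $C_n \leq |\phi(S)| \leq |S| \leq C_n$, forcing $|S| = C_n$ and $\phi|_S$ injective. The induced surjection $R^{C_n} \twoheadrightarrow P_n(a)$ sending the standard basis to $\phi(S)$ is then a surjective endomorphism of a finitely generated module over a commutative ring, hence injective, hence an isomorphism; thus $\phi(S)$ is an $R$-basis of $P_n(a)$, the set $S$ is linearly independent and so an $R$-basis of $\tl_n(a)$, and $\phi$ carries $S$ bijectively onto $\phi(S)$. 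Therefore $\phi$ is an isomorphism of $R$-algebras, as claimed.
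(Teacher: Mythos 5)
The paper gives no proof of this statement at all: it is imported from the literature (Kassel--Turaev, Theorem 5.34, with Boyd--Hepworth's Section 2.1 as a second reference), so there is no in-paper argument to compare against. Your proposal is, in outline, precisely the classical proof found in the cited source: check the three relations of \cref{temperleyliebalgebras} diagrammatically to obtain a homomorphism $\phi$, prove surjectivity by factoring each planar diagram into elementary ones, bound the $R$-module rank of $\tl_n(a)$ by the Catalan number $C_n$ via the Jones normal form, and conclude by a rank comparison. Your closing step --- that a surjection of a finitely generated module onto a free module of the same rank bound forces everything in sight to be a basis --- is the standard ``surjective endomorphism of a finitely generated module over a commutative ring is injective'' argument, and it is sound.

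One point genuinely needs repair. In the surjectivity step you say it suffices to realise every diagram class ``up to a power of $a$'' as a product of the $[D_i]$. Taken literally this is not sufficient: if all you know is $a^m[D] = \phi(w)$ with $m > 0$, you cannot divide by $a$ unless $a$ is a unit, and the entire point of the surrounding paper is that $a \in R$ is arbitrary (the case $a = 0$ included). What the induction must produce --- and does, if carried out carefully --- is a factorisation of each planar diagram as a composite of elementary diagrams in which \emph{no circles arise at all}, so that $[D]$ itself, not a scalar multiple of it, lies in the image of $\phi$. A clean way to get this, which also streamlines your ending: show that the $C_n$ normal-form monomials evaluate under $\phi$ to the $C_n$ pairwise distinct diagram classes, each with coefficient exactly $1$. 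Since the normal forms span $\tl_n(a)$ and their images form the $R$-basis of $P_n(a)$, surjectivity and injectivity of $\phi$ both follow at once, and the final rank-counting argument becomes unnecessary.
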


\begin{definition} \label{planardiagram}
  An element $A \in \tl_n(a)$ is called a \emph{planar diagram} in $\tl_n(a)$, if the isomorphism in \cref{diagraminterpretation} identifies $A$ with the isotopy class $[D] \in P_n(a)$ of some planar diagram $D$ of $n$ arcs.
\end{definition}

Let $\widehat{\tl}_n$ be the two-sided ideal of $\tl_n = \tl_n(a)$ generated by the set $$\{U_i : 1 \leq i \leq n-1\}.$$ In the diagrammatic interpretation (see \cref{diagraminterpretation}),  $\widehat{\tl}_n$ is spanned by all planar diagrams except the identity diagram.

\begin{definition}\label{tltrivialmodule}
  The \emph{trivial module $\trivial$} of the Temperley--Lieb algebra $\tl_n = \tl_n(a)$ is defined via the following exact sequence
  $$0 \to \widehat\tl_n \hookrightarrow \tl_n \twoheadrightarrow \trivial \to 0$$
\end{definition}

Equivalently, the trivial module $\trivial$ is the one-dimensional $\tl_n$-module coming from the augmentation $\epsilon: \tl_n \to R$ that sends every generator $U_i$ to zero.

\section{Vanishing theorems for the homology of Temperley--Lieb algebras} \label{sec:vanishing-theorems-for-the-homology-of-tl-algebras}

This section builds on ideas contained in \cite{boydhepworth2020temperleylieb} and \cite{boyd2020thelowdimensionalhomology}. We introduce the cellular Davis complex for Temperley--Lieb algebras $\tl\frakd$ and prove that it is contractible. We then use it to derive the vanishing results for the homology of Temperley--Lieb algebras, which we stated as Theorem A, B and C in the introduction.

Throughout this section, let $\tl = \tl_{n}(a)$ denote the Temperley--Lieb algebra on $n$ strands with parameter $a \in R$ and let $U_1, \dots, U_{n-1}$ be the standard generators. We sometimes identify $\{U_1, \dots, U_{n-1}\}$ with the set $\setn = \{1, \dots, n-1\}$.

\begin{definition} \label{innermost} \hspace{1pt}
  \begin{enumerate}
  \item A (possibly empty) subset $F \subseteq \setn$ is called \emph{innermost}, if for any $$i \neq j \in F: |i - j| \geq 2.$$
  \item Let $A \in \tl$ be a planar diagram in the sense of \cref{planardiagram} with corresponding isotopy class $[A] \in P_{n}(a)$. $A \in \tl$ is represented by certain monomials in the generating set $\{U_1, \dots, U_{n-1}\}$. We write $$F(A) = \{U_{i_k} : A = U_{i_1} \cdot \dots \cdot U_{i_k} \}$$ for the set of possible last letters in a monomial representing $A$. Using the identification in \cref{diagraminterpretation}, the set $F(A)$ has the following diagrammatic interpretation $$F(A) = \{U_i : [A] \text{ has an arc connecting } (1,i) \text{ and } (1,i+1)\}.$$
    We call an arc connecting $(1,i)$ and $(1,i+1)$ an \emph{innermost right cup} at position $i$ and $F(A)$ \emph{the set of innermost right cups} of the planar diagram $A \in \tl$. Note that $F(A)$ is an innermost set.
  \end{enumerate}  
\end{definition}

\begin{figure}[ht]
  \centering
  \def\svgscale{.9}
\begingroup%
  \makeatletter%
  \providecommand\color[2][]{%
    \errmessage{(Inkscape) Color is used for the text in Inkscape, but the package 'color.sty' is not loaded}%
    \renewcommand\color[2][]{}%
  }%
  \providecommand\transparent[1]{%
    \errmessage{(Inkscape) Transparency is used (non-zero) for the text in Inkscape, but the package 'transparent.sty' is not loaded}%
    \renewcommand\transparent[1]{}%
  }%
  \providecommand\rotatebox[2]{#2}%
  \newcommand*\fsize{\dimexpr\f@size pt\relax}%
  \newcommand*\lineheight[1]{\fontsize{\fsize}{#1\fsize}\selectfont}%
  \ifx\svgwidth\undefined%
    \setlength{\unitlength}{382.67716535bp}%
    \ifx\svgscale\undefined%
      \relax%
    \else%
      \setlength{\unitlength}{\unitlength * \real{\svgscale}}%
    \fi%
  \else%
    \setlength{\unitlength}{\svgwidth}%
  \fi%
  \global\let\svgwidth\undefined%
  \global\let\svgscale\undefined%
  \makeatother%
  \begin{picture}(1,0.37037037)%
    \lineheight{1}%
    \setlength\tabcolsep{0pt}%
    \put(0,0){\includegraphics[width=\unitlength,page=1]{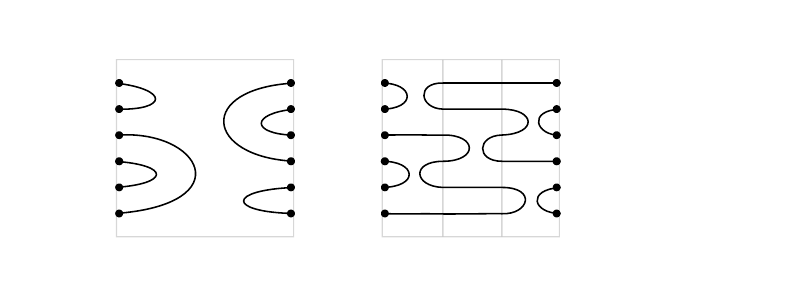}}%
    \put(0.37721169,0.0965831){\color[rgb]{0,0,0}\makebox(0,0)[lt]{\lineheight{1.25}\smash{\begin{tabular}[t]{l}\small $1$\end{tabular}}}}%
    \put(0.37721169,0.1948107){\color[rgb]{0,0,0}\makebox(0,0)[lt]{\lineheight{1.25}\smash{\begin{tabular}[t]{l}\small $4$\end{tabular}}}}%
    \put(0.7105006,0.0965831){\color[rgb]{0,0,0}\makebox(0,0)[lt]{\lineheight{1.25}\smash{\begin{tabular}[t]{l}\small $1$\end{tabular}}}}%
    \put(0.7105006,0.1948107){\color[rgb]{0,0,0}\makebox(0,0)[lt]{\lineheight{1.25}\smash{\begin{tabular}[t]{l}\small $4$\end{tabular}}}}%
    \put(0.03595246,0.18044111){\color[rgb]{0,0,0}\makebox(0,0)[lt]{\lineheight{1.25}\smash{\begin{tabular}[t]{l}$A \hspace{5pt} =$\end{tabular}}}}%
    \put(0.42113762,0.18044111){\color[rgb]{0,0,0}\makebox(0,0)[lt]{\lineheight{1.25}\smash{\begin{tabular}[t]{l}$=$\end{tabular}}}}%
    \put(0.77615092,0.25627237){\color[rgb]{0,0,0}\makebox(0,0)[lt]{\lineheight{1.25}\smash{\begin{tabular}[t]{l}$F(A) = \{1,4\}$\\\\$A = U_5U_2U_3U_4U_1$\\\\$\hspace{10pt}= U_5U_2U_3U_1U_4$\\\end{tabular}}}}%
  \end{picture}%
\endgroup%

  \caption{The set of innermost right cups of an element in  $\tl_6$.}
  \label{fig:innermostrightcupsintl6}
\end{figure}

Observe that by relation $i)$ in \cref{temperleyliebalgebras}, all generators in an innermost set commute with each other. The following modules are therefore well-defined.

\begin{definition} \label{cupmodules}
  Let $F \subseteq \setn$ be an innermost set. Then, we write $\cupmod(F)$ for the left submodule of $\tl$ generated by $\prod_{i \in F} U_i$. Using \cref{diagraminterpretation}, this is the $\tl$-submodule of $\tl_{n}(a) \cong P_{n}(a)$ spanned by all isotopy classes of planar diagrams $[A] \in P_{n}(a)$ that have an innermost right cup at position $i$ for any $i \in F$, i.e.\ that satisfy $F \subseteq F(A)$.
\end{definition}

\subsection{The cellular Davis complex of a Temperley--Lieb algebra} \label{subsec:the-cellular-davis-complex-of-tl} Using the definitions above, we will now introduce and study the $\tl$-chain complex that enables us to formulate our proofs for Theorem A, B and C.

\begin{definition} \label{cellulardaviscomplextl}
  The \emph{cellular Davis complex} $(\tl\frakd, \delta)$ of $\tl$ is the chain complex whose chain module in degree $\alpha$ is given by $$\tl\frakd_\alpha = \bigoplus_{\substack{F \subseteq \setn \text{ innermost},\\ |F| = \alpha}} \cupmod(F)$$ and whose differential
$$\delta_\alpha : \tl\frakd_\alpha \to \tl\frakd_{\alpha-1}$$
factorizes summand-wise as 
$$ \cupmod(F) \to \oplus_{s \in F} \cupmod(F_s) \hookrightarrow \tl\frakd_{\alpha-1},$$
where $F_s = F - \{s\}$ for $s \in F$. The first arrow in this factorization is the map
$$ A \mapsto \sum_{s \in F} (-1)^{\gamma_F(s)} \iota_F^{F_s} (A),$$
where $\iota_F^{F_s}: \cupmod(F) \hookrightarrow \cupmod(F_s)$ is the inclusion and $\gamma_F(s) = |\{s' \in F: s' < s\}|$.
\end{definition}

In other words, the chain module $\tl\frakd_\alpha$ of the cellular Davis complex $\tl\frakd$ has a $R$-module basis consisting of planar diagrams $A$ with $\alpha = |F|$ marked innermost right cups $F \subseteq F(A)$ (see \cref{cupmodules}) and the value of the differential $\delta: \tl\frakd_\alpha \to \tl\frakd_{\alpha - 1}$ on such a marked planar diagram $(A,F)$ is an alternating sum of the marked planar diagrams $(A, F_s)$, i.e.\ copies of the same planar diagram $A$ with one mark $s \in F$ erased.

\begin{figure}[h]
  \centering
  \def\svgscale{.9}
  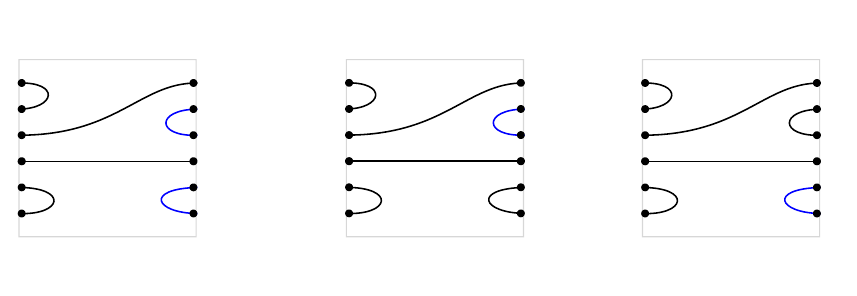
  \caption{The differential of $(\tl_6\frakd, \delta)$ evaluated on an element in $\cupmod(\{1,4\})$.}
  \label{fig:differentialoftlcomplexevaluated}
\end{figure}

We start by verifying that this really defines a chain complex. This argument is standard.

\begin{lemma}
  The cellular Davis complex $(\tl\frakd, \delta)$ is a chain complex of left $\tl$-modules.
\end{lemma}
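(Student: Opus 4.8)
The plan is to verify the two defining properties of a $\tl$-chain complex: that each $\delta_\alpha$ is a homomorphism of left $\tl$-modules, and that $\delta_{\alpha-1} \circ \delta_\alpha = 0$. Both are routine: the first because the maps assembling $\delta$ are inclusions of submodules, and the second because the signs $(-1)^{\gamma_F(s)}$ are precisely the simplicial (Koszul) signs, engineered so that the cross-terms cancel in pairs.

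For the module-map property, I would first record that for an innermost set $F$ and $s \in F$ the set $F_s = F - \{s\}$ is again innermost and satisfies $F_s \subseteq F$. By \cref{cupmodules}, $\cupmod(F)$ is spanned by those planar diagrams $A$ with $F \subseteq F(A)$, and $\cupmod(F_s)$ by those with $F_s \subseteq F(A)$; since $F_s \subseteq F$, every diagram in $\cupmod(F)$ lies in $\cupmod(F_s)$, so the inclusion $\iota_F^{F_s} : \cupmod(F) \hookrightarrow \cupmod(F_s)$ is a well-defined map of left $\tl$-submodules of $\tl$. As $\delta_\alpha$ is, summand by summand, an $R$-linear combination of such inclusions followed by the inclusion into the direct sum $\tl\frakd_{\alpha-1}$, it is a $\tl$-module homomorphism.

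For $\delta^2 = 0$, I would evaluate $\delta_{\alpha-1}\circ\delta_\alpha$ on a planar diagram $A \in \cupmod(F)$ with $|F| = \alpha$. Writing the composite out and using that each inclusion leaves the underlying diagram $A$ unchanged, one obtains a double sum indexed by ordered pairs $(s,t)$ of distinct elements of $F$, with the $(s,t)$-term landing in the summand $\cupmod(F - \{s,t\})$ and carrying the sign $(-1)^{\gamma_F(s) + \gamma_{F_s}(t)}$. The heart of the argument, and the only place where care is needed, is the sign bookkeeping: for a fixed unordered pair $\{s,t\}$ with $s < t$ one checks that $\gamma_{F_s}(t) = \gamma_F(t) - 1$ while $\gamma_{F_t}(s) = \gamma_F(s)$, so the $(s,t)$- and $(t,s)$-terms carry opposite signs and cancel. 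Summing over all pairs yields $\delta^2 = 0$. I do not anticipate any genuine obstacle; the entire content is the verification that the chosen signs produce this standard cancellation, and the earlier observation that the generators indexed by an innermost set commute guarantees that the modules $\cupmod(F)$, and hence the whole complex, are well defined in the first place.
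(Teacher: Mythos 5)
Your proposal is correct and follows essentially the same argument as the paper: evaluate $\delta^2$ on a diagram $A \in \cupmod(F)$, pair up the $(s,t)$- and $(t,s)$-terms landing in the same summand $\cupmod(F_{s,t})$, and cancel them via the identities $\gamma_{F_s}(t) = \gamma_F(t) - 1$ and $\gamma_{F_t}(s) = \gamma_F(s)$ for $s < t$, with $\tl$-equivariance following from the fact that the differential is assembled from inclusions of left submodules. Your additional verification that the inclusions $\iota_F^{F_s}$ are well defined is a point the paper leaves implicit, but it is the same proof.
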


\begin{proof}
  Let $A \in \cupmod(F) \subset \tl\frakd_\alpha$ be a planar diagram. We need to argue that $$\delta^2(A) = 0 \in \tl\frakd_{\alpha-2}.$$
  Observe that
  \begin{align*}
    \delta^2(A) &= \delta(\sum_{s \in F} (-1)^{\gamma_F(s)} \iota_F^{F_s} (A))\\
    &= \sum_{s \in F} (-1)^{\gamma_F(s)} (\sum_{t \in F_s} (-1)^{\gamma_{F_s}(t)} \iota_{F}^{F_{s,t}}(A))\\
    &= \sum_{\substack{(s,t) \in F \times F\\s \neq t}} (-1)^{\gamma_F(s)+\gamma_{F_s}(t)} \iota_F^{F_{s,t}}(A),
  \end{align*}
  where $\iota_F^{F_{s,t}}: \cupmod(F) \hookrightarrow \cupmod(F_{s,t})$ denotes the inclusion. Clearly, $F_{s,t} = F_{t,s}$. It therefore suffices to show that if $s < t$, then
  $$(-1)^{\gamma_F(s)+\gamma_{F_s}(t)}+(-1)^{\gamma_F(t)+\gamma_{F_t}(s)} = 0.$$
  This holds because $s < t$ implies that $$\gamma_F(t) = \gamma_{F_s}(t) + 1 \text{ and } \gamma_F(s) = \gamma_{F_t}(s).$$
  The $\tl$-equivariance of the differential $\delta$ follows from the $\tl$-equivariance of the inclusion maps.
\end{proof}

\begin{theorem} \label{homologycellulardaviscomplex}
  The cellular Davis complex $(\tl\frakd, \delta)$ is contractible with $$H_0(\tl\frakd, \delta) = \trivial.$$
\end{theorem}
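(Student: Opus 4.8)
The plan is to exploit the one structural feature of the differential: by construction $\delta(A,F)=\sum_{s\in F}(-1)^{\gamma_F(s)}(A,F_s)$ only ever erases a mark, never altering the underlying planar diagram $A$. So, writing each chain module in the $R$-basis of marked planar diagrams $(A,F)$ with $F\subseteq F(A)$ innermost and $|F|=\alpha$, the complex $(\tl\frakd,\delta)$ splits \emph{as a complex of $R$-modules} as $\tl\frakd=\bigoplus_A C(A)$, indexed by the planar diagrams $A$ of $\tl$, where $C(A)$ is spanned by $\{(A,F):F\subseteq F(A)\}$. This splitting is of course not $\tl$-equivariant, since the $\tl$-action moves between different diagrams, but that is irrelevant for computing homology groups as $R$-modules. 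The decisive observation is that, after the degree shift identifying an $\alpha$-subset with an $(\alpha-1)$-simplex, $C(A)$ is precisely the augmented simplicial chain complex of the full simplex on the vertex set $F(A)$, with exactly the $\gamma_F$-signs of the simplicial boundary map.

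The single diagram-theoretic input I would isolate is the following claim: a planar diagram $A$ has $F(A)=\emptyset$ if and only if $A$ is the identity diagram. For one direction, if $A$ has any right cup $(1,i)$--$(1,j)$ with $j>i+1$, then the points $(1,i+1),\dots,(1,j-1)$ are trapped inside it and must, by planarity, be matched among themselves; choosing a right cup with $j-i$ minimal forces $j=i+1$, so $A$ has an innermost right cup and $F(A)\neq\emptyset$. Conversely, if $A$ has no right cup, then all $n+1$ right endpoints are through-strands to the left boundary, and a non-crossing bijection of two linearly ordered sets of boundary points must be order-preserving, so $A=1$. Hence $F(A)=\emptyset$ exactly when $A=1$, and every other diagram contributes a nonempty vertex set.

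Granting this, both assertions follow. For $A\neq 1$ the set $F(A)$ is nonempty, the full simplex on it is contractible, and $C(A)$ is acyclic; for $A=1$ the summand $C(1)=R\cdot(1,\emptyset)$ sits in degree $0$. Summing over $A$ gives $H_\star(\tl\frakd)=0$ for $\star>0$. For $H_0$ I would argue directly at the module level: $\tl\frakd_0=\cupmod(\emptyset)=\tl$, and since $\cupmod(\{s\})=\tl U_s$ is spanned by the diagrams with an innermost right cup at $s$, the image $\operatorname{im}(\delta_1)=\sum_{s}\cupmod(\{s\})$ is the span of all non-identity diagrams, which is exactly $\widehat{\tl}$. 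Thus $H_0(\tl\frakd)=\tl/\widehat{\tl}=\trivial$ as $\tl$-modules by \cref{tltrivialmodule}. To upgrade the positive-degree vanishing to an honest contraction of the augmented complex $\tl\frakd\to\trivial$ over $R$ (the sense in which the classical Davis complex is contractible), I would write the cone homotopy explicitly: for $A\neq 1$ set $m(A)=\min F(A)$ and define $h(A,F)=(A,F\cup\{m(A)\})$ when $m(A)\notin F$, $h(A,F)=0$ when $m(A)\in F$, and $h(1,\emptyset)=0$, extended $R$-linearly. Because $m(A)$ is always inserted as the new minimum, the $\gamma$-signs are trivial, and a short check of the four cases (according to whether $m(A)\in F$ and whether $A=1$) yields $\delta h+h\delta=\operatorname{id}-p$, where $p$ projects onto $R\cdot(1,\emptyset)\cong\trivial$.

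I expect the main obstacle to lie not in any single computation but in setting up the marked-diagram bookkeeping so cleanly that the $R$-module splitting and the cone-homotopy signs become transparent; the combinatorial claim of the second paragraph, while elementary, is the one genuinely diagrammatic ingredient, and coning on $\min F(A)$ is the choice that keeps all signs under control.
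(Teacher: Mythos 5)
Your proposal is correct and follows essentially the same route as the paper: the same $R$-module splitting $\tl\frakd = \bigoplus_A \tl\frakd(A)$ over planar diagrams, the same identification of each summand for $A \neq 1$ with the (shifted) augmented chain complex of the simplex on $F(A)$, and the same computation $\operatorname{im}(\delta_1) = \widehat{\tl}$ for $H_0$. Your two additions---an explicit proof that $F(A) = \emptyset$ if and only if $A = 1$ (which the paper uses implicitly), and the explicit cone homotopy on $\min F(A)$ in place of citing acyclicity of a simplex---are correct refinements rather than a different approach.
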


\begin{proof}
  Let $A \in \tl$ be a planar diagram and let $F \subseteq \setn$ be an innermost set. Recall from \cref{cupmodules} that
  $$A \in \cupmod(F) \Longleftrightarrow F \subseteq F(A).$$
  Let $R \{(A,F)\} \subseteq \cupmod(F)$ denote the $R$-linear summand spanned by the isotopy class of $A$ in the diagrammatic picture using the convention that $R \{(A,F)\} = 0$ if $F \not\subseteq F(A)$. We think of $(A, F)$ as a planar diagram with $|F|$-marked innermost right cups. Then, the $R$-module $\cupmod(F)$ admits the following decomposition
  $$\cupmod(F) = \bigoplus_{\substack{A \in \tl,\\ \text{planar diagram}}} R \{(A,F)\}.$$
  Hence, the $R$-module $\tl \frakd_\alpha$ can be written as
  \begin{align*}
    \tl \frakd_\alpha &= \bigoplus_{\substack{F \subseteq \setn \text{ innermost},\\ |F| = \alpha}} \cupmod(F)\\
    &= \bigoplus_{\substack{F \subseteq \setn \text{ innermost},\\ |F| = \alpha}}  \bigoplus_{\substack{ A \in \tl, \\ \text{planar diagram}}} R \{(A,F)\}\\
    &= \bigoplus_{\substack{ A \in \tl, \\ \text{planar diagram}}} \bigoplus_{\substack{F \subseteq \setn \text{ innermost},\\ |F| = \alpha}}   R \{(A,F)\}\\
    &= \bigoplus_{\substack{ A \in \tl, \\ \text{planar diagram}}} \tl \frakd (A)_\alpha,
  \end{align*}
  where we define $$\tl\frakd(A)_\alpha = \bigoplus_{F \subseteq F(A), |F| = \alpha} R \{(A,F)\}.$$
  Observe that for any planar diagram $A \in \tl$ the sequence of submodules $\{\tl\frakd(A)_\alpha\}_{\alpha \in \mathbb{N}}$ forms a $R$-module subcomplex $\tl\frakd(A)$ of the cellular Davis complex $\tl\frakd$ because the partial differentials in $\tl\frakd$ are given by inclusion maps $\cupmod(F) \hookrightarrow \cupmod(F_s)$ with $F_s \subset F$ (see \cref{cellulardaviscomplextl}).
  It follows that, as a chain complex in $R$-modules, the cellular Davis complex is a coproduct\footnote{A similar splitting has been used by Boyd--Hepworth--Patzt in their work on the homology of Brauer algebras \cite{boydhepworthpatzt2020thehomologyofthebraueralgebras}, Definition 5.5.}, $$\tl\frakd = \bigoplus_{\substack{A \in \tl,\\ \text{planar diagram}}} \tl\frakd(A).$$
  
  We will calculate the homology of each subcomplex $\tl\frakd(A)$. If $A = id$, then $F(A) = \emptyset$ and $\tl\frakd(A) = R \{(id, \emptyset)\}[0]$ is concentrated in degree $0$. If $A \neq id$, then $\tl\frakd(A)$ is exactly the augmented chain complex $\tilde{C}(\Delta)[+1]$ of the simplex $\Delta$ on the vertex set $F(A)$, where the vertices are ordered using $F(A) \subset \setn$ and the chain complex is shifted up by one degree. Therefore, the homology of the complex $\tl\frakd(A)$ is zero in every degree, if $A \neq id$. The identification $H_0(\tl\frakd, \delta) = \trivial$ as a $\tl$-module holds because $im(\delta_1) = \widehat\tl$, where the right side is as in \cref{tltrivialmodule}, and $\tl\frakd_0 = \cupmod(\emptyset) = \tl$.
\end{proof}

\subsection{Homology with coefficients in \texorpdfstring{$\cupmod(F)$}{Cup-modules}} \label{subsec:homology-with-coefficients-in-cup-modules} Let $F \subseteq \setn$ be an innermost set. The homology groups $H_\star(\tl, \cupmod(F))$ of the Temperley--Lieb algebra $\tl$ with coefficients in $\cupmod(F)$ will occur on the $E_1$-page of the spectral sequences, which we will use to derive Theorem A, B and C. For this reason, we will now study the modules $\cupmod(F)$ and the homology groups $H_\star(\tl, \cupmod(F))$. We begin by collecting several important observations about innermost sets.

The following notion explains why the cellular Davis complex is ``sensitive'' to the question whether the underlying Temperley--Lieb algebra is defined on an even or an odd number of strands.

\begin{definition} \label{definitionofm}
  If $n$ is even, we call $M = \{1, 3, \dots, n-1\} \subset \setn$ the \emph{unique maximal innermost set}.
\end{definition}

Note that if $n$ is even, then $M = \{1, 3, \dots, n-1\} \subset \setn$ really is the unique innermost set of maximal cardinality. If the number of strands $n \geq 3$ is odd and in contrast to the even case, there exist multiple different innermost sets of maximal cardinality (see \cref{fig:innermostsetsofmaximalcardinalityoddvseven}).

\begin{figure}[h]
  \centering
  \def\svgscale{.9}
  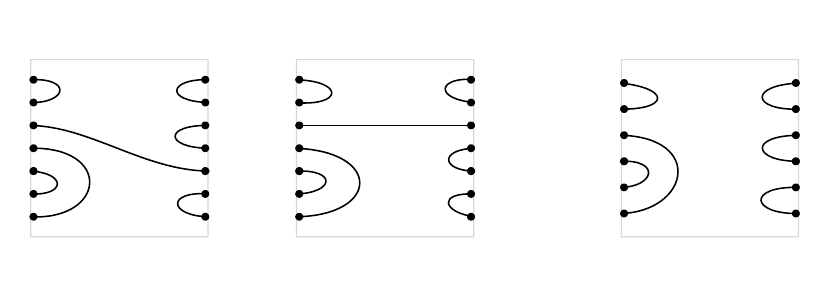
  \caption{Innermost sets of maximal cardinality for $\langle 6 \rangle$ (left) and $\langle 5 \rangle$ (right).}
  \label{fig:innermostsetsofmaximalcardinalityoddvseven}
\end{figure}

In particular the next observation always applies, if the Temperley--Lieb algebra is defined on an odd number $n \geq 3$ of strands.

\begin{observation} \label{stringnexttocup}
  If $F \subset \setn$ is nonempty and innermost, but not the unique maximal innermost set $M$, then there exists an index $1 \leq i \leq n$ such that either $i-1 \not\in F$ and $s = i+1 \in F$ or $i \not\in F$ and $s = i-2 \in F$.
\end{observation}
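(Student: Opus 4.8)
The plan is to prove the contrapositive, after first rephrasing the two alternatives so that they speak about elements of $F$ rather than the auxiliary index $i$. The first alternative holds exactly when there is some $s \in F$ with $s \geq 1$ and $s - 2 \notin F$ (take $i = s - 1$, and observe that for $s = 1$ the value $s - 2 = -1$ lies outside $\setn$, so it is automatically not in $F$). The second alternative holds exactly when there is some $s \in F$ with $s \leq n - 2$ and $s + 2 \notin F$ (take $i = s + 2$, and observe that for $s = n-2$ the value $s + 2 = n$ is automatically not in $F$). The boundary behaviour here — that $s = 0$ can never trigger the first alternative and $s = n-1$ can never trigger the second — is precisely what will single out $M$, so keeping track of these endpoints is the heart of the matter.

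So I would assume, for contradiction, that $F$ is nonempty and innermost but that \emph{neither} alternative holds, and write $F = \{f_1 < \dots < f_m\}$. Negating the first alternative means: every $f_j \geq 1$ satisfies $f_j - 2 \in F$. Applied to the minimum $f_1$, this forces $f_1 = 0$, since $f_1 \geq 2$ would put $f_1 - 2 \in F$ below the minimum and $f_1 = 1$ would require $-1 \in F$. Applied to each $f_j$ with $j \geq 2$, it gives $f_j - 2 \in F$, hence $f_j - 2 \leq f_{j-1}$; combined with the innermost inequality $f_{j-1} \leq f_j - 2$ this pins every consecutive gap to be exactly $2$, so $F = \{0, 2, 4, \dots, 2(m-1)\}$.

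Next I would run the symmetric argument on the other end using the negation of the second alternative, namely that every $f_j \leq n-2$ satisfies $f_j + 2 \in F$. Applied to the maximum $f_m$ this forces $f_m = n-1$: the case $f_m \leq n-3$ would place $f_m + 2 \in F$ above the maximum, and $f_m = n - 2$ would require $n \in F$, which is out of range. Now $\max F = 2(m-1) = n - 1$ shows that $n - 1$ is even, so $n$ is odd and $F = \{0, 2, \dots, n-1\} = M$, contradicting $F \neq M$. (When $n$ is even the equation $2(m-1) = n-1$ is already impossible, so no nonempty innermost set can fail both alternatives — consistent with $M$ being undefined in that case.)

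The only genuine obstacle I anticipate is the endpoint bookkeeping described in the first paragraph: one must be deliberate about allowing the "missing neighbour" $s - 2$ or $s + 2$ to take the out-of-range values $-1$ or $n$, because it is exactly this convention that makes both alternatives fail for $F = M$ and for no other nonempty innermost set. Once this is set up correctly, the interior gap analysis and the parity conclusion are routine.
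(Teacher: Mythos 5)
Your proof is correct, but it takes a different route from the paper's. The paper argues directly by cases on the minimum $z$ of $F$: if $z \neq 0$, then $i = z-1$ witnesses the first alternative; if $z = 0$, it uses $F \neq M$ to pick the smallest position $i \in \{0, \dots, n\}$ not covered by any cup $\{j, j+1\}$ with $j \in F$, and since those cups tile $\{0, \dots, i-1\}$, the index $i$ is even with $s = i-2 \in F$, witnessing the second alternative. You instead prove the contrapositive: after rephrasing both alternatives as closure conditions on distance-two neighbours inside $F$ (with the out-of-range values $-1$ and $n$ counting as absent), you show that a nonempty innermost set failing both must equal $\{0, 2, \dots, 2(m-1)\}$ with maximum $n-1$, hence must be $M$ with $n+1$ even. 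The combinatorial core is shared --- failing the first alternative forces $F$ to be packed from $0$ onward --- but the payoffs differ slightly: the paper's direct argument hands you an explicit witness index $i$ (the position just left of the first cup, or the first uncovered position), while yours gives the sharper classification that $M$ is the \emph{only} nonempty innermost set satisfying neither alternative, which in particular makes transparent why the hypothesis $F \neq M$ is vacuous, and the conclusion unconditional, when the number of strands $n+1$ is odd. Your endpoint bookkeeping is exactly right and is what the statement's index range $0 \leq i \leq n$ is designed to accommodate.
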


\begin{proof}
  Since $F$ is nonempty, there exists a smallest $z \in F$. If $z \neq 1$,  we set $1 \leq i \coloneqq z - 1 \leq n$. Then, $i - 1 = z - 2 \notin F$ by the minimality of $z$ and $s = i + 1 = z \in F$ by definition. Hence, the first case applies. Assume that $z = 1 \in F$. Because $F \neq M$, the set $\{1, \dots, n\} \setminus \{j, j+1 : j \in F\}$ is nonempty and contains a smallest index $i$. The assumption $1 \in F$ and the minimality of $i$ imply that $i \geq 3$ is an odd number such that $s = i-2 \in F$. Therefore, the second case applies.
\end{proof}

In the setting of \cref{stringnexttocup} we can think of the index $1 \leq i \leq n$ as the vertex $(1,i)$ on the right side of \emph{any} planar diagram $A$ in $\cupmod(F)$. Then, this vertex has the property that it is next to the \emph{marked} innermost right cup at position $s \in F$ but not itself the start- or endpoint of a \emph{marked} innermost right cup. \medskip

The following proposition is closely related to Section 3 of \cite{boydhepworth2020temperleylieb} and replaces the ``inductive resolutions'', which Boyd--Hepworth introduced, in our setting.

\begin{proposition} \label{retraction}
  Let $F \subset \setn$ be nonempty and innermost, but not the unique maximal innermost, and choose $i \in \{1, \dots, n\}$ and $s \in \{i-2, i+1\}$ as in \cref{stringnexttocup}. Let $F_s = F - \{s\}$. Then, $$\cupmod(F) \hookrightarrow \cupmod(F_s)$$ is a retract of left $\tl$-modules.
\end{proposition}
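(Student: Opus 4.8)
The plan is to realize the splitting explicitly, as \emph{right multiplication} by a carefully chosen element $e \in \tl$. The virtue of this approach is that right multiplication by any fixed element of $\tl$ is automatically a homomorphism of \emph{left} $\tl$-modules, so left-linearity comes for free and only two computations remain. Throughout I write $u_G = \prod_{j \in G} U_j$ for an innermost set $G$; this is well defined because the generators indexed by an innermost set commute, and by \cref{cupmodules} we have $\cupmod(G) = \tl \cdot u_G$. Since $s \in F$ and $F_s = F - \{s\}$, I may write $u_F = u_{F_s} U_s$, so that $\cupmod(F) = \tl u_F \subseteq \tl u_{F_s} = \cupmod(F_s)$ and the map under consideration is indeed the inclusion $\iota_F^{F_s}$.

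First I would extract from \cref{stringnexttocup} the generator that does the work. Set $t = i$ when $s = i+1$, and $t = i-1$ when $s = i-2$; in either case $|s-t| = 1$, so the second defining relation in \cref{temperleyliebalgebras} yields
$$U_s U_t U_s = U_s. \quad (\ast)$$
Moreover the characterising properties of $i$ in \cref{stringnexttocup}, together with the fact that $F$ is innermost, force $\{t-1,t,t+1\} \cap F_s = \emptyset$: for instance when $s = i+1$ one has $i-1 \notin F$ by hypothesis, $i \notin F$ (otherwise $i, i+1 \in F$ would violate innermost-ness), and $i+1 = s \notin F_s$; the case $s = i-2$ is symmetric. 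Consequently $U_t$ commutes with every factor of $u_{F_s}$, i.e.
$$U_t \, u_{F_s} = u_{F_s} \, U_t. \quad (\ast\ast)$$

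Then I would set $e = U_t U_s$ and let $r$ be right multiplication by $e$, and verify that $r$ restricts to a retraction $\cupmod(F_s) \to \cupmod(F)$. The image lands in $\cupmod(F)$: writing a general element of $\cupmod(F_s)$ as $x\,u_{F_s}$ and using $(\ast\ast)$,
$$ x\, u_{F_s} \cdot U_t U_s = x\, U_t\, u_{F_s} U_s = x\, U_t\, u_F \in \tl u_F = \cupmod(F). $$
And $r$ is the identity on $\cupmod(F)$: writing an element as $x\,u_F = x\,u_{F_s} U_s$ and using $(\ast)$,
$$ x\, u_{F_s} U_s \cdot U_t U_s = x\, u_{F_s}\,(U_s U_t U_s) = x\, u_{F_s} U_s = x\, u_F. $$
This last identity is precisely $r \circ \iota_F^{F_s} = \mathrm{id}_{\cupmod(F)}$, exhibiting $\iota_F^{F_s}$ as a split injection of left $\tl$-modules.

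The crux --- and the only genuinely delicate point --- is the choice of $e$. The naive candidate $e = U_s$ fails: it recreates the marked cup at $s$ but at the cost of a factor of $a$ (since $u_F U_s = u_{F_s} U_s^2 = a\,u_F$), which is not invertible in general, so right multiplication by $U_s$ is multiplication by $a$ rather than the identity on $\cupmod(F)$. Appending the neighbouring generator $U_t$ repairs this through the relation $(\ast)$, which absorbs the offending scalar; the role of \cref{stringnexttocup} is exactly to guarantee a position $t$ adjacent to the marked cup $s$ whose generator can be slid past $u_{F_s}$ as in $(\ast\ast)$. Once $e$ is identified, the remaining verifications are immediate applications of the defining relations, so I do not expect any further difficulty.
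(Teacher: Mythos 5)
Your proof is correct and is essentially the same as the paper's: the paper also defines the retraction as right multiplication by $U_{i-1}U_{i-2}$ (when $s=i-2$) or $U_iU_{i+1}$ (when $s=i+1$), which is exactly your $U_tU_s$, and verifies well-definedness via the commutation of $U_t$ with the generators indexed by $F_s$ and the splitting via the relation $U_sU_tU_s=U_s$. The only difference is cosmetic: you unify the two cases of \cref{stringnexttocup} with the single symbol $t$, whereas the paper treats them separately.
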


\begin{figure}[ht]
  \centering
  \def\svgscale{.9}
  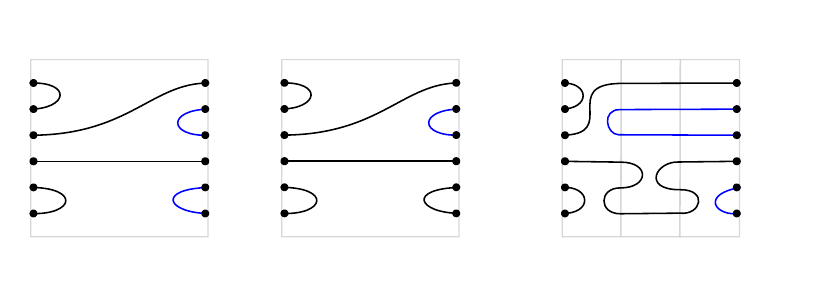
  \caption{Illustration of the retraction constructed in \cref{retraction}.}
  \label{fig:retractionofcupmodules}
\end{figure}

\begin{proof}
  Assume that $i \not\in F$ and $s = i-2 \in F$. Consider the map of left modules (see \cref{fig:retractionofcupmodules})
  $$\cupmod(F_s) \to \cupmod(F): c \mapsto c \cdot (U_{i-1}U_{i-2}).$$
  We need to check that this map is well-defined. If $c \in \cupmod(F_s)$, then $c = A \cdot \prod_{j \in F_s} U_j$ for some $A \in \tl$. Therefore, $$c \cdot (U_{i-1}U_{i-2}) = (A \cdot \prod_{j \in F_s} U_j) (U_{i-1}U_{i-2}) = (A \cdot U_{i-1}) (\prod_{j \in F} U_j),$$ where we used that $U_{i-1}$ commutes with all $\{U_z\}_{z \in F_s}$ and that $s = i-2 \in F$. The commutativity of $U_{i-1}$ with $\{U_z\}_{z \in F_s}$ follows from \cref{stringnexttocup} because
  $$|(i-1) - z| \geq \min\{|(i-1)-(i-4)|, |(i-1)-(i+1)|\} \geq 2 \text{ for any } z \in F_s.$$
  We therefore proved that $c \cdot (U_{i-1}U_{i-2}) \in \cupmod(F)$, hence that the map is well-defined. To see that it defines a retraction, we observe the following. If $c \in \cupmod(F)$, then $c = c' \cdot U_{i-2}$. Therefore, $$c \cdot (U_{i-1}U_{i-2}) = c' \cdot (U_{i-2}U_{i-1}U_{i-2}) = c' \cdot U_{i-2} = c.$$
  The argument for the case where $i-1 \not\in F$ and $s = i+1 \in F$ is similar. We consider the map of left modules
  $$\cupmod(F_s) \to \cupmod(F): c \mapsto c \cdot (U_{i} U_{i+1}).$$
  By \cref{stringnexttocup} it holds that $|i - z| \geq \min\{|i-(i-2)|, |i-(i+3)|\} \geq 2$ for $z \in F_s$. This again implies that the map is well-defined. To see that it defines a retraction, we use that if $c \in \cupmod(F)$, then $c = c' \cdot U_{i+1}$. Therefore, $$c \cdot (U_{i}U_{i+1}) = c' \cdot (U_{i+1}U_{i}U_{i+1}) = c' \cdot U_{i+1} = c.$$
\end{proof}

\begin{remark}
  The elements that we use to define the retractions in \cref{retraction} are exactly the same elements that Boyd--Hepworth use in Section 3 of \cite{boydhepworth2020temperleylieb} to define ``inductive resolutions''.
\end{remark}

The role of the following corollary in the arguments presented in this paper is similar to the role of Theorem F in \cite{boydhepworth2020temperleylieb}.

\begin{corollary} \label{vanishing}
  Let $F \subseteq \setn$ be innermost, but not the unique maximal innermost. Then, $$\cupmod(F) \hookrightarrow \tl$$ is a retract of left $\tl$-modules and $H_\star(\tl, \cupmod(F)) = 0$ for $\star > 0$.
\end{corollary}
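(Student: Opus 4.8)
The plan is to deduce both statements from \cref{retraction} by stripping away the elements of $F$ one at a time. Both claims will in fact follow once I show that the inclusion $\cupmod(F)\hookrightarrow\tl=\cupmod(\emptyset)$ is split as a map of left $\tl$-modules: a split injection exhibits $\cupmod(F)$ as a direct summand of the free left module $\tl$, hence as a projective left $\tl$-module, and projective modules carry no higher $\tor$, so that $H_\star(\tl,\cupmod(F))=\tor^{\tl}_\star(\trivial,\cupmod(F))=0$ for $\star>0$. I would therefore concentrate on producing the splitting and read off the homology statement at the end.

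To build the splitting I would induct on $|F|$. The base case $F=\emptyset$ is immediate, since then $\cupmod(F)=\tl$ and the inclusion is the identity, which is trivially split. For the inductive step, assume $F\neq\emptyset$. Because $F$ is nonempty, innermost, and not the unique maximal innermost set, \cref{stringnexttocup} supplies an index $i$ together with an element $s\in\{i-2,i+1\}\cap F$, and \cref{retraction} then shows that $\cupmod(F)\hookrightarrow\cupmod(F_s)$ is a retract of left $\tl$-modules, where $F_s=F-\{s\}$. Since a composite of split injections is again a split injection, it remains to apply the inductive hypothesis to $F_s$ and compose the two splittings to obtain the desired splitting of $\cupmod(F)\hookrightarrow\tl$.

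The one point that needs checking — and the only real subtlety — is that $F_s$ again lies in the class to which the inductive hypothesis applies, i.e.\ that $F_s$ is innermost but not the unique maximal innermost set. That $F_s$ is innermost is clear, since any subset of an innermost set is innermost. For the second condition I would distinguish the parity of $n+1$: if $n+1$ is odd there is no unique maximal innermost set (see the discussion following \cref{definitionofm}), so the condition is vacuous; if $n+1$ is even, then $M$ is the \emph{unique} innermost set of maximal cardinality, so the hypothesis $F\neq M$ forces $|F|<|M|$, whence $|F_s|=|F|-1<|M|$ rules out $F_s=M$ on cardinality grounds alone. With this bookkeeping in place the induction goes through, establishing that $\cupmod(F)$ is a direct summand of $\tl$; the homology vanishing then follows from projectivity exactly as explained above, the case $F=\emptyset$ being covered by the same argument since there $\cupmod(F)=\tl$ is free.
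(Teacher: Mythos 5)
Your proposal is correct and follows essentially the same route as the paper: an induction on $|F|$ using \cref{retraction} to split the inclusion $\cupmod(F)\hookrightarrow\tl$, followed by the standard observation that a direct summand of $\tl$ has vanishing higher homology (the paper phrases this as injectivity of $H_\star(\tl,\cupmod(F))\to H_\star(\tl,\tl)$ rather than via projectivity, but the content is identical). Your explicit check that $F_s$ again satisfies the hypotheses — vacuous in the odd case, a cardinality argument in the even case — is a detail the paper leaves implicit, and it is correct.
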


\begin{proof}
  If $F = \emptyset$, the retraction statement is trivial because $\cupmod(\emptyset) = \tl$. If $F \neq \emptyset$, it follows from \cref{retraction} by induction. The fact that $\cupmod(F)$ is a retract of the free $\tl$-module $\tl$ implies that $\cupmod(F)$ is a projective $\tl$-module. Therefore, we conclude that $H_\star(\tl, \cupmod(F)) = 0$ for $\star > 0$.
\end{proof}

In the next lemma, we compute the degree zero homology of the Temperley--Lieb algebras with coefficients in $\cupmod(F)$.

\begin{lemma} \label{zerothhomology}
  Assume that $F \subseteq \setn$ is an innermost set. Then,
  $$H_0(\tl, \cupmod(F)) \cong \trivial \otimes_{\tl} \cupmod(F) \cong
  \begin{cases}
    R,& \text{ if } F = \emptyset,\\
    R/a,& \text{ if } n = 2 \text{ and } F = \{1\},\\
    0,& \text{ if } n > 2 \text{ and } F \neq \emptyset.
  \end{cases}
  $$
\end{lemma}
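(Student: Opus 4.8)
The plan is to reduce $H_0$ to an explicit quotient and then settle a single membership question. Since $\tor^{\tl}_0(\trivial, \cupmod(F)) = \trivial \otimes_{\tl} \cupmod(F)$ and $\trivial = \tl/\widehat\tl$ as a right module (\cref{tltrivialmodule}), I would first identify
$$H_0(\tl, \cupmod(F)) \cong \trivial \otimes_{\tl} \cupmod(F) \cong \cupmod(F)/\big(\widehat\tl \cdot \cupmod(F)\big).$$
Writing $u_F = \prod_{i \in F} U_i$, the module $\cupmod(F) = \tl\, u_F$ is cyclic, and because $\widehat\tl$ is a two-sided ideal we have $\widehat\tl\,\tl = \widehat\tl$, hence $\widehat\tl\cdot\cupmod(F) = \widehat\tl\, u_F$. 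Using $\tl = R\cdot 1 \oplus \widehat\tl$ as $R$-modules, the quotient is the cyclic $R$-module generated by the class of $u_F$, i.e.\ $R/J$ with $J = \{r \in R : r\,u_F \in \widehat\tl\, u_F\}$. The entire lemma thus comes down to deciding, case by case, when $u_F \in \widehat\tl\, u_F$.

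Next I would dispatch the two boundary cases. For $F = \emptyset$ one has $\cupmod(\emptyset) = \tl$, so the quotient is $\tl/\widehat\tl = \trivial = R$. For $n = 1$ the algebra $\tl = \tl_2(a)$ has $R$-basis $\{1, U_0\}$, so $\cupmod(\{0\}) = \tl\, U_0 = R\,U_0$ is free of rank one, while $\widehat\tl\, U_0 = R\,U_0^2 = a R\,U_0$; therefore $J = (a)$ and the quotient is $R/a$.

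The heart of the argument is the case $n > 1$ and $F \neq \emptyset$, where the goal is to prove $u_F \in \widehat\tl\, u_F$ so that the quotient vanishes. I would pick any $s \in F$; since $n \geq 2$, the index $s$ has at least one neighbour $t \in \{s-1, s+1\} \cap \setn$. As the generators indexed by $F$ all commute (\cref{innermost}), I can factor $u_F = U_s \cdot w$ with $w = \prod_{j \in F \setminus \{s\}} U_j$, pulling $U_s$ to the far left. Then the relation $U_s U_t U_s = U_s$ yields
$$(U_s U_t)\, u_F = (U_s U_t U_s)\, w = U_s\, w = u_F,$$
and $U_s U_t \in \widehat\tl$ since it has augmentation zero. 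Hence $u_F \in \widehat\tl\, u_F$ and $H_0(\tl, \cupmod(F)) = 0$.

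The step I expect to be the main obstacle is precisely this last one. The tempting first strategy is to choose the neighbour $t$ so that $U_t$ \emph{commutes} with every remaining cup $\{U_j\}_{j \in F\setminus\{s\}}$, which would let one slide $U_t$ inward before applying $U_s U_t U_s = U_s$; but that commutation can fail for all $s$ exactly when $F = M = \{0, 2, \dots, n-1\}$ (with $n$ odd), the dense maximal innermost set in which every interior cup is flanked on both sides. The resolution, and the point to get right, is that no such commutation is needed: once $U_s$ is pulled to the left as $u_F = U_s\, w$, the relation $U_s U_t U_s = U_s$ absorbs the extra generator directly and uniformly in $F$, \emph{including} $F = M$. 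I would then only double-check the boundary bookkeeping — that a valid neighbour $t$ always exists when $n \geq 2$, and that the sole genuinely exceptional pair $(n,F) = (1,\{0\})$ is what produces the $R/a$ summand.
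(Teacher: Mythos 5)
Your proposal is correct and takes essentially the same route as the paper: both identify $H_0(\tl, \cupmod(F))$ with the quotient $\cupmod(F)/\widehat\tl \cdot \cupmod(F)$ via right exactness, settle $F = \emptyset$ and $(n,F) = (1,\{0\})$ by direct computation, and for $n > 1$, $F \neq \emptyset$ absorb the generator into $\widehat\tl$ using the relation $U_s U_t U_s = U_s$ with a neighbouring generator $U_t$, exactly as in the paper's key step. Your reduction to the cyclic generator $u_F$ via $\tl = R \cdot 1 \oplus \widehat\tl$ and $\widehat\tl \cdot \tl = \widehat\tl$ is only a minor repackaging of the paper's elementwise argument, and your closing observation that no commutation of $U_t$ with the remaining cups is needed is precisely why the argument works uniformly, including for the maximal innermost set.
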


\begin{proof}
  The short exact sequence defining the trivial module $\trivial$ (\cref{tltrivialmodule}) and the right exactness of $- \otimes_{\tl} \cupmod(F)$ yield an exact sequence
  $$\widehat\tl  \otimes_{\tl} \cupmod(F) \to \tl  \otimes_{\tl} \cupmod(F) \to \trivial  \otimes_{\tl} \cupmod(F) \to 0.$$
  It follows that
  $$\trivial \otimes_{\tl} \cupmod(F) \cong \frac{\cupmod(F)}{\widehat\tl \cdot \cupmod(F)}.$$
  We use this to compute $\trivial \otimes_{\tl} \cupmod(F)$ in each case. If $F = \emptyset$, then $\cupmod(F) = \tl$ and we find that $$\trivial \otimes_\tl \cupmod(F) = \trivial \otimes_\tl \tl \cong R.$$
  If $n = 2$ and $F = \{1\}$, then $\cupmod(F) = \widehat\tl = R\{U_1\}$ and $\widehat\tl \cdot \widehat\tl = aR\{U_1\}$. Hence, $$\trivial \otimes_\tl \cupmod(F) \cong R/a.$$ For the last case, we assume that $n > 2$ and $F \neq \emptyset$. We will prove that
  $$\cupmod(F) \subseteq \widehat\tl \cdot \cupmod(F).$$
  Consider an element $A \in \cupmod(F)$. The definition of $\cupmod(F)$ implies that $A = A' \cdot (\prod_{j \in F} U_j)$ for some $A' \in \tl$. We fix a generator $U_i$, whose index $i$ has the property that $i \in F \neq \emptyset$. Because $\tl$ has $n-1 > 1$ generators, it follows that $i-1$ or $i+1$ also indexes a generator $U_{i-1}$ or $U_{i+1}$. By relation $ii)$ of \cref{temperleyliebalgebras}, it follows that
  $$A' \cdot (U_iU_{i-1}) \cdot \prod_{j \in F} U_j = A' \cdot \prod_{j \in F} U_j \text{ or } A' \cdot (U_iU_{i+1}) \cdot \prod_{j \in F} U_j = A' \cdot \prod_{j \in F} U_j.$$
  Note in either case, $B \coloneqq A' \cdot (U_iU_{i-1})$ or $B \coloneqq A' \cdot (U_iU_{i+1})$, we have that $B \in \widehat{\tl}$. Therefore,
  $$A = B \cdot (\prod_{j \in F} U_j) \in \widehat\tl \cdot \cupmod(F).$$
  It follows that $\cupmod(F) \subseteq \widehat\tl \cdot \cupmod(F)$ and hence that $\trivial \otimes_{\tl} \cupmod(F) \cong 0$.
\end{proof}

\begin{remark}
  The homology with trivial coefficients of the Temperley--Lieb algebra $\tl$ on one generator has been completely computed by Boyd--Hepworth in \cite[Proposition 7.1]{boydhepworth2020temperleylieb}. For the case $n = 2$ and $F = \{1\}$ in \cref{zerothhomology}, it holds that $H_0(\tl, \cupmod(F)) \cong H_1(\tl, \trivial)$. In particular, this case can also be deduced from \cite[Proposition 7.1]{boydhepworth2020temperleylieb}.
\end{remark}

\subsection{Proof of Theorem A, B and C} \label{subsec:proof-of-theorem-a-b-c} We will now prove the three main theorems. The following result, stated as Theorem A in the introduction, generalizes \cite[Theorem D]{boydhepworth2020temperleylieb} and the ``odd''-part of \cite[Theorem B]{boydhepworth2020temperleylieb}.

\begin{theorem} \label{theoremtlodd}
  If $n$ is odd, then $H_0(\tl, \trivial) = R$ and $H_\star(\tl, \trivial) = 0$ for $\star > 0$.
\end{theorem}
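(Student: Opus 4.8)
The plan is to use the cellular Davis complex $\tl\frakd$ as an explicit \emph{projective} resolution of the trivial module $\trivial$ and then read off $\tor^{\tl}_\star(\trivial,\trivial)$ directly. The crucial point, and the place where the parity of $n+1$ enters, is the following: \cref{definitionofm} singles out a unique maximal innermost set $M$ only when $n+1$ is \emph{even}. Hence, when $n+1$ is odd there is no unique maximal innermost set, and the hypothesis ``$F$ innermost but not the unique maximal innermost'' of \cref{vanishing} is satisfied by \emph{every} innermost set $F \subseteq \setn$.

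First I would record that, since $n+1$ is odd, \cref{vanishing} supplies a retraction of left $\tl$-modules $\cupmod(F) \hookrightarrow \tl$ for every innermost $F$. A retract of the free module $\tl$ is projective, so each summand $\cupmod(F)$ is a projective left $\tl$-module, and therefore every chain module $\tl\frakd_\alpha = \bigoplus_{|F|=\alpha}\cupmod(F)$ of \cref{cellulardaviscomplextl} is projective. By \cref{homologycellulardaviscomplex} the complex $\tl\frakd$ is acyclic in positive degrees with $H_0(\tl\frakd) = \trivial$, so the augmentation $\tl\frakd \to \trivial$ exhibits $\tl\frakd$ as a projective resolution of $\trivial$.

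It then follows that $H_\star(\tl,\trivial) = \tor^{\tl}_\star(\trivial,\trivial)$ is computed by the complex $\trivial \otimes_{\tl} \tl\frakd$. In degree $\alpha$ this complex equals $\bigoplus_{|F|=\alpha}\trivial \otimes_{\tl}\cupmod(F)$, and \cref{zerothhomology} evaluates each summand: because $n+1$ odd forces $n$ to be even, the exceptional case $n=1$ never occurs, so $\trivial \otimes_{\tl}\cupmod(F) \cong R$ when $F=\emptyset$ and vanishes for every nonempty $F$ (using $n>1$; the case $n=0$ has only $F=\emptyset$ and is immediate). Thus $\trivial \otimes_{\tl}\tl\frakd$ is concentrated in degree zero with value $R$, which yields $H_0(\tl,\trivial)=R$ and $H_\star(\tl,\trivial)=0$ for $\star>0$.

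Most of the work has already been carried out in the preceding results, so no serious analytic obstacle remains; the decisive \emph{conceptual} step is the parity observation, which promotes \cref{vanishing} from a statement about \emph{most} cup modules to one about \emph{all} of them and thereby turns $\tl\frakd$ into a genuine projective resolution. Equivalently—and this is the formulation that will generalize to the even case—one can run the homology spectral sequence of the double complex $P_\bullet \otimes_{\tl}\tl\frakd$ attached to a projective resolution $P_\bullet \to \trivial$: its $E^1$-page is $E^1_{p,q}=\bigoplus_{|F|=p}H_q(\tl,\cupmod(F))$ abutting to $H_{p+q}(\tl,\trivial)$, and the vanishing of $H_q(\tl,\cupmod(F))$ for $q>0$ collapses it onto the row $q=0$, where \cref{zerothhomology} again identifies the surviving complex $\trivial \otimes_{\tl}\tl\frakd$ with $R$ concentrated in degree zero.
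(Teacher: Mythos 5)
Your proof is correct, and its main route is genuinely different from---and more economical than---the paper's. The paper never observes that the chain modules of $\tl\frakd$ are projective; instead it forms the double complex $P_\star \otimes_{\tl} \tl\frakd$ for a free resolution $P_\star \to \trivial$ and plays its two spectral sequences against each other: the vertical one (via \cref{homologycellulardaviscomplex}) identifies the abutment with $H_\star(\tl,\trivial)$, while the horizontal one has $E^1$-page $\bigoplus_{|F|=\alpha} H_\beta(\tl,\cupmod(F))$, which \cref{vanishing} and \cref{zerothhomology} collapse to a single copy of $R$ at $(0,0)$. You instead upgrade the retract statement of \cref{vanishing}: a left-module retract of the free module $\tl$ is a direct summand, hence projective, so in the odd case every $\tl\frakd_\alpha$ is projective and \cref{homologycellulardaviscomplex} exhibits $\tl\frakd \to \trivial$ as a projective resolution of $\trivial$; then $\tor^{\tl}_\star(\trivial,\trivial)$ is read off from $\trivial \otimes_{\tl} \tl\frakd$, whose positive-degree terms vanish by \cref{zerothhomology}. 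Your handling of the edge cases is also right: $n+1$ odd forces $n$ even, so the exceptional value $R/a$ at $n=1$ never arises, and $n=0$ is trivial. What the paper's heavier packaging buys is uniformity: the identical double-complex setup is reused verbatim in \cref{theoremtleven}, where $\cupmod(M)$ is \emph{not} a retract of $\tl$, the resolution viewpoint breaks down, and the spectral sequence still extracts the vanishing line and the identification of the higher homology; your closing remark correctly identifies this and shows how your argument specializes back to the paper's.
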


\begin{proof}
  Let $P_\star$ be a free resolution of the trivial $\tl$-module $\trivial$ and consider the double complex $P_\star \otimes_\tl \tl\frakd$, where $\tl\frakd$ is the cellular Davis complex for Temperley--Lieb algebras (see \cref{cellulardaviscomplextl}). The horizontal and vertical filtration of $P_\star \otimes_\tl \tl\frakd$ give rise to two spectral sequences. The $vE^1$-page of the vertical spectral sequence is given by $$vE^1_{\alpha,\beta} = H_\beta(P_\alpha \otimes_{\tl} \tl\frakd) \cong P_\alpha \otimes_{\tl} H_\beta(\tl\frakd).$$
  It follows from \cref{homologycellulardaviscomplex} that the $vE^1$-page is concentrated in degree $\beta = 0$, where it is given by the complex $vE^1_{\star,0} \cong P_\star \otimes_{\tl} \trivial$. By definition, we therefore have that $$vE^2_{\alpha,0} = H_\alpha(\tl, \trivial).$$
  The collapsing of the vertical spectral sequence on the $vE^2$-page implies that the horizontal spectral sequence converges to  $H_{\alpha+\beta}(\tl, \trivial)$. The $hE^1$-page of the horizontal spectral sequence is given by
  \begin{align*}
    hE^1_{\alpha, \beta} &= H_\beta(P_\star \otimes_{\tl} \tl\frakd_\alpha)\\
    &\cong \bigoplus_{\substack{F \subseteq \setn \text{ innermost},\\ |F| = \alpha}} H_\beta(P_\star \otimes_{\tl} \cupmod(F))\\
    &\cong \bigoplus_{\substack{F \subseteq \setn \text{ innermost},\\ |F| = \alpha}} H_\beta(\tl, \cupmod(F)).
  \end{align*}
  
  Because $n$ is odd, \cref{vanishing} applies to any innermost subset $F \subset \setn$. Together with \cref{zerothhomology}, this implies that
  $$hE^1_{\alpha,\beta} =
  \begin{cases}
    R,& \text{ if } (\alpha,\beta) = (0,0),\\
    0,& \text{ if } (\alpha,\beta) \neq (0,0).
  \end{cases}
  $$
  The theorem follows.
\end{proof}

The next vanishing result, stated as Theorem B in the introduction, is similar to the ``even''-part of \cite[Theorem B]{boydhepworth2020temperleylieb}. The vanishing line, which Boyd--Hepworth obtain, is stronger than ours (slope $1$ versus slope $\frac{1}{2}$) and in fact optimal \cite[Theorem C]{boydhepworth2020temperleylieb}. The description of the high-dimensional homology in terms of homology with coefficients is new in the sense that the coefficient system is different from the one occurring in \cite{boydhepworth2020temperleylieb}.

\begin{theorem} \label{theoremtleven}
  If $n$ is even, then $H_0(\tl, \trivial) = R$, $H_\star(\tl, \trivial) = 0$ for $0 < \star < \frac{n}{2}$ and $$H_{\star + \frac{n}{2}} (\tl, \trivial) \cong H_{\star}(\tl, \cupmod(M))$$ for $\star \geq 0$ and where $M = \{1,3,\dots, n-1\} \subseteq \setn$ is the unique maximal innermost set.
\end{theorem}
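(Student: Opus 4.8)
The plan is to reuse verbatim the double-complex machinery from the proof of \cref{theoremtlodd} and then locate the single place where the hypothesis of \cref{vanishing} breaks down. I would take a free resolution $P_\star$ of $\trivial$ and form the double complex $P_\star \otimes_\tl \tl\frakd$ with $\tl\frakd$ the cellular Davis complex (\cref{cellulardaviscomplextl}). By \cref{homologycellulardaviscomplex} the vertical spectral sequence collapses onto the row $\beta = 0$, where it computes $P_\star \otimes_\tl \trivial$, so both filtrations converge to $H_{\alpha+\beta}(\tl, \trivial)$, and the horizontal $E^1$-page is
$$hE^1_{\alpha, \beta} \cong \bigoplus_{\substack{F \subseteq \setn \text{ innermost},\\ |F| = \alpha}} H_\beta(\tl, \cupmod(F)).$$
All of this is identical to the odd case; the difference only enters through the structure of the set of innermost sets.

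The decisive input is the \emph{uniqueness} of the maximal innermost set $M$ (\cref{definitionofm}), which has $|M| = \frac{n+1}{2}$. First I would feed \cref{vanishing} and \cref{zerothhomology} into the display above. For every innermost $F \neq M$ the module $\cupmod(F)$ is a retract of $\tl$, so $H_\beta(\tl, \cupmod(F)) = 0$ for $\beta > 0$; and by \cref{zerothhomology}, $H_0(\tl, \cupmod(F))$ equals $R$ for $F = \emptyset$ and $0$ for every nonempty innermost $F \neq M$ (a nonempty innermost set distinct from $M$ forces $n > 1$, so the last case of \cref{zerothhomology} applies). Since $M$ is the unique innermost set of maximal cardinality, this shows: the columns $0 < \alpha < \frac{n+1}{2}$ vanish identically; the column $\alpha = 0$ collapses to the single group $R$ at $(0,0)$; the column $\alpha = \frac{n+1}{2}$ consists precisely of the groups $H_\beta(\tl, \cupmod(M))$; and all columns $\alpha > \frac{n+1}{2}$ vanish because nothing is larger than $M$. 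Thus $hE^1$ is supported in exactly the two columns $\alpha = 0$ and $\alpha = \frac{n+1}{2}$.

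It then remains to check degeneration on these two surviving columns. A differential $d^r$ sends $(\alpha, \beta) \mapsto (\alpha - r, \beta + r - 1)$, so the only way to connect column $\frac{n+1}{2}$ to column $0$ is $r = \frac{n+1}{2}$, landing in vertical degree $\beta + \frac{n+1}{2} - 1$; as column $0$ lives only in $\beta = 0$, this requires $\beta = 1 - \frac{n+1}{2} \leq 0$, impossible for $\beta \geq 0$ unless $n+1 = 2$. In that lone edge case the relevant $d^1$ is induced by the inclusion $\cupmod(M) = \widehat\tl \hookrightarrow \tl$, which becomes the zero map after $\trivial \otimes_\tl -$ because $\widehat\tl$ lies in the kernel of $\tl \twoheadrightarrow \trivial$ (\cref{tltrivialmodule}). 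Hence $hE^\infty = hE^1$, and reading off total degrees gives $H_0(\tl, \trivial) = R$, vanishing for $0 < \star < \frac{n+1}{2}$, and $H_{\star + \frac{n+1}{2}}(\tl, \trivial) \cong H_\star(\tl, \cupmod(M))$ for $\star \geq 0$. The main obstacle is precisely the isolation of the column $\alpha = \frac{n+1}{2}$: one must know that $M$ is the \emph{only} innermost set to which \cref{vanishing} fails to apply, so that a single module $\cupmod(M)$ survives and carries all the higher homology into one column; the degeneration is then pure bookkeeping of bidegrees, with only the degenerate algebra $\tl_2(a)$ needing the separate check above.
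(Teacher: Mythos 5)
Your proof is correct and takes essentially the same route as the paper: the same double complex $P_\star \otimes_\tl \tl\frakd$, the same identification of the two surviving columns $\alpha = 0$ and $\alpha = \frac{n+1}{2}$ via \cref{vanishing} and \cref{zerothhomology}, and the same check that the lone possible $d^1$ vanishes when $n+1 = 2$ because $\cupmod(M) = \widehat\tl$ dies under the augmentation. The only difference is cosmetic: you spell out the bidegree bookkeeping that forces degeneration for $n+1 > 2$, which the paper leaves implicit.
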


\begin{proof}
  By the same argument as in the proof of \cref{theoremtlodd}, we obtain a spectral sequence converging to $H_{\alpha+\beta}(\tl, \trivial)$ with $hE^1$-page $$hE^1_{\alpha, \beta} \cong \bigoplus_{\substack{F \subseteq \setn \text{ innermost},\\ |F| = \alpha}} H_\beta(\tl, \cupmod(F)).$$
  Because $n$ is even, there exists a unique innermost set $M = \{1, 3, \dots, n-1\} \subset \setn$ of maximal cardinality $|M| = \frac{n}{2}$. \cref{vanishing} applies to all innermost subsets $F \subset \setn$ except $M$. Together with \cref{zerothhomology}, this implies that
  $$hE^1_{\alpha,\beta} =
  \begin{cases}
    R,& \text{ if } (\alpha,\beta) = (0,0),\\
    H_\beta(\tl, \cupmod(M)),& \text{ if } (\alpha,\beta) = (\frac{n}{2}, \beta),\\
    0,& \text{ else.} 
  \end{cases}
  $$
  If $n = 2$, we observe that the only possibly nontrivial differential on the $hE^1$-page
  \begin{align*}
    d_1: H_0(\tl, \cupmod(M)) = \trivial \otimes_{\tl} \cupmod(M) & \to \trivial \otimes_{\tl} \tl = H_0(\tl, \tl)\\
    I \otimes A & \mapsto I \otimes A = I \cdot A \otimes 1 = 0
  \end{align*}
  is the zero map. We conclude that the spectral sequence collapses on the $hE^1$-page. The theorem follows.
\end{proof}

\begin{remark}
  \cref{zerothhomology} implies that in the setting of \cref{theoremtleven} the following holds: For $n = 2$, $H_{\frac{n}{2}}(\tl, \trivial) = R/a$, and for $n > 2$, $H_{\frac{n}{2}}(\tl, \trivial) = 0$. This is consistent with Theorem B and C of \cite{boydhepworth2020temperleylieb} mentioned above.
\end{remark}

We finish by presenting an alternative proof of \cite[Theorem A]{boydhepworth2020temperleylieb}, i.e.\ Theorem C stated in the introduction.

\begin{theorem}[{\cite[Theorem A]{boydhepworth2020temperleylieb}}] \label{homologyforinvertibleparameters}
  If the parameter $a \in R$ is a unit, then $$H_0(\tl, \trivial) = R \text{ and } H_\star(\tl, \trivial) = 0 \text{ for } \star > 0.$$
\end{theorem}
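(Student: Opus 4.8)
The plan is to separate the two parities of $n+1$ and to notice that the unit hypothesis is only needed in the even case. If $n+1$ is odd, then \cref{theoremtlodd} already gives the conclusion without any assumption on $a$, so there is nothing further to prove. If $n+1$ is even, \cref{theoremtleven} reduces the problem to a single coefficient computation: it vanishes $H_\star(\tl,\trivial)$ in the range $0<\star<\frac{n+1}{2}$ and identifies the remaining groups through the isomorphism $H_{\star+\frac{n+1}{2}}(\tl,\trivial)\cong H_\star(\tl,\cupmod(M))$. Hence it suffices to show that, when $a$ is a unit, $H_\star(\tl,\cupmod(M))=0$ for all $\star\ge 0$, where $M$ is the unique maximal innermost set of \cref{definitionofm}.

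The key step is to exhibit $\cupmod(M)$ as a projective left $\tl$-module. Write $e=\prod_{i\in M}U_i$, so that $\cupmod(M)=\tl\cdot e$ by \cref{cupmodules}. Since $M$ is innermost, the generators $\{U_i\}_{i\in M}$ pairwise commute, and relation $iii)$ of \cref{temperleyliebalgebras} gives $e^2=\prod_{i\in M}U_i^2=a^{|M|}\prod_{i\in M}U_i=a^{|M|}e$. When $a$ is a unit I can therefore pass to the rescaled element $\tilde e = a^{-|M|}e$, which satisfies $\tilde e^2=\tilde e$. Thus $\tilde e$ is an idempotent and $\cupmod(M)=\tl\cdot e=\tl\cdot\tilde e$ is a direct summand of $\tl$ as a left module, via the splitting $\tl=\tl\tilde e\oplus \tl(1-\tilde e)$. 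In particular $\cupmod(M)$ is projective and the inclusion $\cupmod(M)\hookrightarrow \tl$ is a retract of left $\tl$-modules, exactly as in \cref{vanishing} but now available for the maximal set $M$ as well.

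From here the argument mirrors the proof of \cref{vanishing}. The retraction makes $H_\star(\tl,\cupmod(M))\to H_\star(\tl,\tl)$ injective, and since $\tl$ is free over itself the target vanishes for $\star>0$; hence $H_\star(\tl,\cupmod(M))=0$ for $\star>0$. For $\star=0$ I invoke \cref{zerothhomology}: if $n>1$ then $H_0(\tl,\cupmod(M))=0$ because $M\neq\emptyset$, while if $n=1$ then $M=\{0\}$ and $H_0(\tl,\cupmod(M))\cong R/a=0$ precisely because $a$ is a unit. Combining these, $H_\star(\tl,\cupmod(M))=0$ for every $\star\ge 0$, and feeding this into the isomorphism of \cref{theoremtleven} yields $H_\star(\tl,\trivial)=0$ for $\star\ge \frac{n+1}{2}$; together with the vanishing range of \cref{theoremtleven} this gives $H_\star(\tl,\trivial)=0$ for all $\star>0$, with $H_0(\tl,\trivial)=R$.

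The main obstacle is recognizing the idempotent: everything hinges on the identity $e^2=a^{|M|}e$ and on the fact that invertibility of $a$ converts the ``almost idempotent'' $e$ into an honest idempotent, which is what upgrades $\cupmod(M)$ from a mere submodule to a projective summand of $\tl$. The small subtlety to treat carefully is the $n=1$ boundary case, where $\cupmod(M)$ is not killed by \cref{zerothhomology} outright but only after using the unit hypothesis to conclude $R/a=0$.
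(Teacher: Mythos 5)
Your proof is correct, but the key mechanism differs from the paper's. The reduction is identical: by \cref{theoremtlodd} and \cref{theoremtleven}, everything comes down to showing $H_\star(\tl,\cupmod(M))=0$ for all $\star\geq 0$ when $n+1$ is even, and both arguments settle degree zero with \cref{zerothhomology}, using $R/a=0$ in the $n=1$ case. Where you differ is in how the positive-degree vanishing is obtained. The paper never touches the full product $e=\prod_{i\in M}U_i$; instead it sets $M_0=M-\{0\}$ and uses the left-module map $\cupmod(M_0)\to\cupmod(M)$, $c\mapsto c\cdot U_0$, whose composite with the inclusion $\cupmod(M)\hookrightarrow\cupmod(M_0)$ is multiplication by $a$. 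For $a$ a unit this forces $H_\star(\tl,\cupmod(M))\to H_\star(\tl,\cupmod(M_0))$ to be injective, and the target vanishes in positive degrees by \cref{vanishing} applied to the \emph{non-maximal} set $M_0$. You instead rescale $e$ to the idempotent $\tilde{e}=a^{-|M|}e$ and split $\cupmod(M)=\tl\tilde{e}$ off $\tl$ in a single step. Your route yields the stronger structural conclusion that \cref{vanishing} extends to the maximal innermost set once $a$ is invertible --- $\cupmod(M)$ is a projective direct summand of $\tl$ --- and it pinpoints exactly where the unit hypothesis enters, namely in normalizing the almost-idempotent $e$. The paper's route is more economical: it peels off a single cup, reusing the retraction machinery of \cref{retraction} and \cref{vanishing} rather than constructing a new splitting; implicitly it reaches the same structural fact, since composing its two split injections $\cupmod(M)\hookrightarrow\cupmod(M_0)\hookrightarrow\tl$ also exhibits $\cupmod(M)$ as a summand of $\tl$.
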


\begin{proof}
  By \cref{theoremtlodd} and \cref{theoremtleven}, it suffices to prove that for $n \geq 2$ even and $M \subset \setn$ the unique maximal innermost set, we have that $H_\star(\tl, \cupmod(M)) = 0$ for $\star \geq 0$. To see this, we consider the innermost set $M_1 = M - \{1\}$. There is a map of left $\tl$-modules $$\cupmod(M_1) \to \cupmod(M): c \mapsto c \cdot U_1$$ Observe that the map obtained by precomposition with the inclusion $$\cupmod(M) \hookrightarrow \cupmod(M_1) \to \cupmod(M)$$ is multiplication by $a$. If $a$ is a unit, the inclusion induced map $$H_\star(\tl, \cupmod(M)) \to H_\star(\tl, \cupmod(M_1))$$ must therefore be an injection. By \cref{vanishing}, the target of this map is zero if $\star > 0$. For $\star = 0$, we invoke \cref{zerothhomology}.
\end{proof}

\emergencystretch=1em
\printbibliography

\end{document}